\theoremstyle{plain}
\newtheorem{theorem}{Theorem}[section]
\newtheorem{lemma}[theorem]{Lemma}
\newtheorem{proposition}[theorem]{Proposition}
\numberwithin{equation}{section}
\newtheorem{algorithm}[theorem]{Algorithm}
\theoremstyle{definition}
\newtheorem{definition}[theorem]{Definition}
\theoremstyle{remark}
\newtheorem{remark}[theorem]{Remark}
\newtheorem{assumption}[theorem]{Assumption}
\setlist[itemize]{leftmargin=.5in}
\setlist[enumerate]{leftmargin=.5in,topsep=3pt,itemsep=3pt,label=(\roman*)}
\newcommand{\email}[1]{\href{#1}{#1}}
\newcommand{\TheTitle}{Sampling Methods for Bayesian Inference Involving Convergent Noisy Approximations of Forward Maps} 
\newcommand{\TheAuthors}{G. Garegnani}
\title{{\TheTitle}}
\author{Giacomo Garegnani\thanks{Institute of Mathematics, \'Ecole Polytechnique F\'ed\'erale de Lausanne, \email{giacomo.garegnani@epfl.ch}}}
\date{}
\newcommand{\abs}[1]{\left\lvert #1 \right\rvert}
\newcommand{\norm}[1]{\left\lVert #1 \right\rVert}
\renewcommand{\phi}{\varphi}
\renewcommand{\theta}{\vartheta}
\newcommand{\iid}{\ensuremath{\stackrel{\text{i.i.d.}}{\sim}}}
\newcommand{\R}{\mathbb{R}}
\newcommand{\defeq}{\coloneqq}
\newcommand{\E}{\operatorname{\mathbb{E}}}
\newcommand{\Eb}[1]{\E\left[#1\right]}
\newcommand{\trace}{\operatorname{tr}}
\newcommand{\Hell}{d_{\mathrm{H}}}
\newcommand{\dd}{\, \mathrm{d}}
\renewcommand{\d}{\mathrm{d}}
\newcommand{\expapp}{\widehat{\exp}}
\newcommand{\V}{\mathbb{V}}
\newcommand{\Vm}[2]{\V^{#1}\left[#2\right]}
\newcommand{\Em}[2]{\E^{#1}\left[#2\right]}
\newcommand{\bias}{\mathrm{Bias}}
\definecolor{leg1}{RGB}{0,114,189}
\definecolor{leg2}{RGB}{217,83,25}
\definecolor{leg3}{RGB}{237,177,32}
\definecolor{leg4}{RGB}{126,47,142}
\definecolor{leg5}{RGB}{119,172,48}
\definecolor{leg21}{RGB}{62,38,169}
\definecolor{leg22}{RGB}{46,135,247}
\definecolor{leg23}{RGB}{55,200,151}
\definecolor{leg24}{RGB}{254,195,56}
\begin{document}
\maketitle 

\begin{abstract} We present Bayesian techniques for solving inverse problems which involve mean-square convergent random approximations of the forward map. Noisy approximations of the forward map arise in several fields, such as multiscale problems and probabilistic numerical methods. In these fields, a random approximation can enhance the quality or the efficiency of the inference procedure, but entails additional theoretical and computational difficulties due to the randomness of the forward map. A standard technique to address this issue is to combine Monte Carlo averaging with Markov chain Monte Carlo samplers, as for example in the pseudo-marginal Metropolis--Hastings methods. In this paper, we consider mean-square convergent random approximations, and quantify how Monte Carlo errors propagate from the forward map to the solution of the inverse problems. Moreover, we review and describe simple techniques to solve such inverse problems, and compare performances with a series of numerical experiments.
\end{abstract}

\textbf{AMS subject classifications.} 62F15, 65C05, 65N21, 65N75.

\textbf{Keywords.} Bayesian Inference, Randomized Approximations, Monte Carlo Sampling.

\section{Introduction}

In this work, we study inverse problems involving random approximations of a possibly deterministic forward map. The setting we consider arises in many applications. In particular, one can think of random misfit models, in which the likelihood is computed only on a random subset of the data (see e.g. \cite{LMB17,LST18}), or of multiscale models, where fast-scale effects can be modelled as random and one infers an effective slow-scale map from multiscale data \cite{DSW21}. Another possible application is given by probabilistic numerical methods (see e.g. the review papers \cite{OaS19, COS19, HOG15}), which came into prominence in recent years and whose purpose is to quantify numerical errors in a statistical manner, rather than with traditional error bounds. 

We set ourselves in a framework of mean-square convergent approximations, meaning that the random surrogate converges strongly to the true forward map with respect to a discretization parameter $h$ vanishing. Let us remark that discretization, as in the case of random misfits model, could be as well and equivalently interpreted with respect to some growing integer parameter $N$, where $N$ is the number of points (i.e., the computational power) on which the approximation is computed.

In the context of multiscale problems, it is often possible to exploit ergodic properties of the fast-scale component of the forward map in order to extract single-scale surrogates. In particular, the case of forward maps yielding a multiscale diffusion processes, where the whole forward map is intrinsically random, has been considered in a series of works \cite{AGP21,GaZ21,AGZ20,PaS07,PPS09,PPS12}. Inversion of multiscale forward maps with randomization and ensemble techniques has recently been proposed in \cite{DSW21}. In other related works, numerical homogenization (see e.g. \cite{AEE12,PaS08,BLP78,CiD99}) is employed to invert multiscale forward maps involving partial differential equations in a fully deterministic fashion \cite{NPS12,AbD19} or by interpreting (only) the parameter as a random variable in a Bayesian framework \cite{AbD19,AGZ20}.

Probabilistic numerical methods have been developed for a series of diverse numerical tasks, including the numerical solution of linear systems, or the computation of integrals. Most notably, a series of works focused on the probabilistic quantification of approximation errors in the solution of ordinary \cite{Ski92, KeH16, KSH20, TKS19, SSH19, CCC16, SDH14, MaM21, CGS17, LSS19b, TLS18, TZC16, LSS21} and partial differential equations \cite{COS17, COS17b, OCA19, CCC16, CGS17, Owh15, Owh17, OwZ17, RPK17, RPK17b, GFY21}. One of the advantages of probabilistic numerical methods is that they allow ``propagating uncertainty in computational
pipelines'' \cite{OaS19}, where uncertainty is due to numerical discretization. A notable example of such computational pipelines is given by inverse problems, especially in their Bayesian interpretation, for which the beneficial effects of adopting a probabilistic approach has been demonstrated in a series of works \cite{CGS17,AbG20,AbG21,CCC16,COS17,OCA19,COS17b}. 

Employing a randomized approximation of the forward map in the context of Bayesian inverse problem entails additional theoretical and algorithmic difficulties due to the double randomness: both the parameter and the forward map are in this case not deterministic values. A comprehensive study of the implications due to the replacement of a deterministic map by a randomized approximation is presented in \cite{LST18}, where the authors focus on two possible approaches, which lead to a ``marginal'' and a ``sample'' posterior measure, respectively. In particular, the approximation of the true posterior by these two objects is studied extensively, and convergence results are rigorously proved. 

In this work, we consider a framework similar to the one of \cite{LST18} and present sampling-based approximations of the marginal and the sample measures. Indeed, both those measures contain intractable integrals, which we propose here to approximate by means of Monte Carlo approximations. The main contributions of this paper are mainly two:
\begin{enumerate}
	\item We introduce Monte Carlo-based approximations of the marginal and sample approaches to Bayesian inverse problems, and rigorously study their convergence properties towards their corresponding exact posterior;
	\item We present sampling methodologies which should be employed when solving Bayesian inverse problems which involve probabilistic approximations, and assess their performances numerically on test cases.
\end{enumerate}

The outline of the remainder of this paper is as follows. In \cref{sec:Setting} we introduce the theoretical setting, and the probability measures which are the object of our theoretical study. Then, in \cref{sec:Sample} we describe sampling methodologies for both the marginal and the sample approaches introduced in \cite{LST18}. We then present in \cref{sec:Linear} an assessment of the numerical performances of these methods when applied to linear problems, for which closed-form posteriors exist. In \cref{sec:Conv} we present the proof of our convergence results, which are the main theoretical contribution of this work, and finally draw our conclusions in \cref{sec:Conclusion}.

\subsection{Notation}

We denote by $\mathcal P(X)$ the space of probability measures on a measurable space $(X, \mathcal B(X))$. For any measurable function $\phi \colon X \to \R$ and measure $\mu \in \mathcal P(X)$ we write the expectation and variance of $\phi$ as 
\begin{equation}
	\Em{\mu}{\phi} = \int_\Omega \phi(x) \dd\mu(x), \quad \Vm{\mu}{\phi} = \Em{\mu}{\left(\phi - \Em{\mu}{\phi}\right)^2},
\end{equation}
provided $\phi$ is $\mu$-integrable and square $\mu$-integrable, respectively. For an event $B \in \mathcal B(X)$, we say that $B$ occurs $\mu$-a.s. if $\mu(B) = 1$. Given a probability space $(\Omega, \mathcal A, P)$ and a measurable space $(X, \mathcal B(X))$, we call random variable any measurable function $u \colon \Omega \to X$. We employ the acronym i.i.d. for a set of independent and identically distributed random variables $\{u^{(i)}\}_{i=1}^M$. For $p \in [0, \infty]$ and $\mu \in \mathcal P(X)$, we employ the symbol $L^p_\mu(X)$ for the usual Lebesgue space
\begin{equation}
	L^p_\mu(X) \defeq \left\{\phi \colon X \to \R, \int_X \abs{\phi(u)}^p \dd\mu(u) < \infty\right\},
\end{equation} 
with associated norm $\norm{\cdot}_{L^p_\mu(X)}$, and where the choice $p = \infty$ yields the usual $L_\mu^\infty(X)$ space. Let $\nu, \mu \in \mathcal P(X)$ be both absolutely continuous with respect to  a reference measure $\lambda \in \mathcal P(X)$. We then denote by $\Hell(\mu, \nu)$ the Hellinger distance between $\nu$ and $\mu$, i.e.,
\begin{equation}
\Hell(\mu, \nu)^2 = \frac12 \int_X \left(\sqrt{\frac{\d \mu}{\d \lambda}} - \sqrt{\frac{\d \nu}{\d \lambda}}\right)^2 \dd \lambda.
\end{equation}
We recall that the Hellinger distance between two measures $\mu, \nu \in \mathcal P(X)$ bounds the difference between expectations and variances with respect to the same measures (see \cite[Lemma 6.37]{Stu10}), and is equivalent to the total variation distance.

\section{Setting}\label{sec:Setting}

Let $(X, \mathcal B(X), \norm{\cdot}_X)$ and $(Y, \mathcal B(Y), \norm{\cdot}_Y)$ be measurable Banach spaces, and let for simplicity $\dim(Y) < \infty$. Let us moreover introduce a function $\mathcal G\colon X \to Y$ that we call the forward map. We then consider the ill-posed inverse problem
\begin{equation}\label{eq:BIP}
	\text{find } u \in X \text{ given observations } y = \mathcal G(u) + \beta \in Y,
\end{equation}
where $\beta \sim \mathcal N(0, \Gamma)$ is a Gaussian noise and $\Gamma$ is a non-singular covariance on $Y$. We regularize problem \eqref{eq:BIP} by adopting the Bayesian paradigm. In particular, we let $\mu_0 \in \mathcal P(X)$ denote the prior measure, so that formally the posterior $\mu \in \mathcal P(X)$ is given in terms of its Radon--Nykodim derivative by
\begin{equation}\label{eq:Posterior}
	\frac{\d \mu}{\d \mu_0}(u) = \frac{\exp(-\Phi^y(u))}{Z^y}, \qquad Z^y = \int_X \exp(-\Phi^y(u)) \dd \mu_0(u),
\end{equation}
where the potential $\Phi^y$ is given by
\begin{equation}
	\Phi^y(u) = \frac12 \norm{\Gamma^{-1/2}(y - \mathcal G(u))}_Y^2.
\end{equation}
In the following, we drop for economy of notation the dependence of $\Phi_h^y$ and $Z_h^y$ on $y$ and write $\Phi_h$ and $Z_h$. We choose for simplicity to consider Gaussian priors only, in particular $\mu_0 = \mathcal N(m_0, C_0)$, where $m_0 \in X$ and $C_0$ is a trace-class covariance operator on $X$. With this choice and under a growth and a Lipschitz condition on $\mathcal G$ \cite[Assumption 2.7]{Stu10}, the posterior $\mu$ is well defined and given by \eqref{eq:Posterior} \cite{Stu10}. Let us remark that other prior choices are discussed e.g. in \cite{DaS16,Sul17,Hos17,HoN17,DHS12}.
 
Let $(\Omega, \mathcal A)$ be a measurable space, let $h > 0$, and let $\mathcal G_h \colon \Omega \times X \to Y$ be a random forward map approximating $\mathcal G$, in a sense specified below. We denote by $\nu_h$ the distribution of the first argument of $\mathcal G_h$, and assume that the randomness of the two arguments of $\mathcal G_h$ are independent. We measure the quality of the approximation of $\mathcal G$ by $\mathcal G_h$ in terms of the mean-square error, which we define below.

\begin{definition}\label{def:MSOrder} Let $\mathcal G \colon X \to Y$ be a forward map and let $\mathcal G_h \colon \Omega \times X \to Y$ be a randomized approximation of $\mathcal G$. Moreover, let $\mu_0 \in \mathcal P(X)$. We say that $\mathcal G_h$ has mean-square order of convergence $s$ with respect to $\mu_0$ if 
\begin{equation}
	\Em{\nu_h}{\norm{\mathcal G(u) - \mathcal G_h(\cdot,u)}^2_Y}^{1/2} \leq Ch^s, \qquad \mu_0\text{-a.s.},
\end{equation}
for some $s > 0$ and for a positive constant $C$ independent of $h$.
\end{definition}

Let us remark that the definition above depends on the measure $\mu_0$. In the following, we write for simplicity that $\mathcal G_h$ has mean-square order $s$, neglecting the dependence on $\mu_0$. Another possibility would be to consider weak approximations of $\mathcal G$, i.e., forward maps $\mathcal G_h$ satisfying
\begin{equation}
	\norm{\mathcal G(u) - \Em{\nu_h}{\mathcal G_h(\cdot, u)}}_Y \leq Ch^w, \qquad \mu_0\text{-a.s.},
\end{equation}
for some $w > 0$ and $C > 0$ independent of $h$. For the purpose of this work, though, it is necessary to make the stronger assumption that the forward map converges in the mean-square sense.

Let us now replace $\mathcal G$ by $\mathcal G_h$ in \eqref{eq:Posterior} and obtain the random measure $\mu_{h,\mathrm{s}} \colon \Omega \to \mathcal P(X)$ with Radon--Nykodim derivative
\begin{equation}\label{eq:Sample}
	\frac{\d \mu_{h,\mathrm{s}}(\omega)}{\d \mu_0}(u) = \frac{\exp(-\Phi_h(\omega, u))}{Z_h(\omega)}, \qquad Z_h(\omega) = \int_X \exp(-\Phi_h(\omega, u)) \dd \mu_0(u),
\end{equation}
where the approximate potential is the random variable $\Phi_h \colon \Omega \times X\to \R$ defined by
\begin{equation}
	\Phi_h(\omega, u) = \frac12 \norm{\Gamma^{-1/2}(y - \mathcal G_h(\omega,u))}_Y^2.
\end{equation}
In the following, we drop for economy of notation the dependence of the random variables $\mathcal G_h$, $\mu_{h,s}$, $\Phi_h$ and $Z_h$ on $\omega \in \Omega$. Well-posedness of the measure $\mu_{h,\mathrm{s}}$ is shown in \cite{LST18}, where it is called the sample approximation of the posterior $\mu$. The randomization introduced by the forward map $\mathcal G_h$ has to be averaged for practical purposes, which can be achieved in two different ways. First, we consider the marginal measure $\mu_{h,\mathrm{m}} \in \mathcal P(X)$, given by
\begin{equation}\label{eq:Marginal}
	\frac{\d \mu_{h,\mathrm{m}}}{\d \mu_0}(u) = \frac{\Em{\nu_h}{\exp(-\Phi_h(u))}}{\Em{\nu_h}{Z_h}}.
\end{equation}
Second, we consider the averaged measure $\mu_{h,\mathrm{a}} \in \mathcal P(X)$, defined by
\begin{equation}\label{eq:Averaged}
\frac{\d \mu_{h,\mathrm{a}}}{\d \mu_0}(u) = \Em{\nu_h}{\frac{\exp(-\Phi_h(u))}{Z_h}}.
\end{equation}
The marginal and the averaged measures are both approximations of the sample measure $\mu_{h,\mathrm{s}}$, but are different in spirit. Indeed, for the former we compute averages over the randomization of $\mathcal G_h$ for any $u \in X$. For the latter, instead, we first fix $\omega \in \Omega$, compute the posterior associated to $\mathcal G_h(\omega, \cdot)$ on the whole $X$, and then average over $\Omega$. 

Both the marginal measure $\mu_{h,\mathrm{m}}$ and the averaged measure $\mu_{h,\mathrm{a}}$ cannot be employed directly. Indeed, the expectation $\E^{\nu_h}$ is, in most cases, intractable. It is therefore natural to employ Monte Carlo integration to obtain approximations of both these measures. Given a positive integer $M$ and a i.i.d. sample $\{\Phi_h^{(i)}\}_{i=1}^M$ such that $\Phi_h^{(1)} \sim \Phi_h$, we introduce the averaged quantities
\begin{equation}
\begin{aligned}
	&\exp^M\left(-\Phi_h(u)\right) \defeq \frac1M\sum_{i=1}^M \exp\left(-\Phi_h^{(i)}(u)\right),\\
	&Z_h^M \defeq \frac1M \sum_{i=1}^M Z_h^{(i)}, \qquad Z_h^{(i)} \defeq \int_X \exp\left(-\Phi_h^{(i)}\right) \dd \mu_0(u).
\end{aligned}
\end{equation}
We can now give the definition of the probability measures we consider in this work.
\begin{definition}\label{def:MCApprox} With the notation above, we define the Monte Carlo approximation $\mu_{h,\mathrm{m}}^M \in \mathcal P(X)$ of the marginal measure as 
	\begin{equation}\label{eq:MargMC}
		\frac{\d \mu_{h,\mathrm{m}}^M}{\d \mu_0}(u) = \frac{\exp^M\left(-\Phi_h(u)\right)}{Z_h^M}.
	\end{equation}
	Moreover, we define the Monte Carlo approximation $\mu_{h,\mathrm{a}}^M \in \mathcal P(X)$ of the averaged measure as
	\begin{equation}
		\mu_{h,\mathrm{a}}^M(\d u) = \frac1M \sum_{i=1}^M \mu_{h,\mathrm{s}}^{(i)}(\d u), \qquad 
		\frac{\d \mu_{h,\mathrm{s}}^{(i)}}{\d \mu_0}(u) = \frac{\exp\left(-\Phi_h^{(i)}(u)\right)}{Z_h^{(i)}}.
	\end{equation}
\end{definition}

\begin{remark} Both the Monte Carlo approximations $\mu_{h,\mathrm{m}}^M$ and $\mu_{h,\mathrm{a}}^M$ are random measures due to the randomness of the sample. In the following, we adopt the slight abuse of notation of denoting by $\E^{\nu_h}$ the expectation with respect to sample draws. 
\end{remark}

\section{Statement of Convergence Results}

Before stating the convergence results we prove in this work, let us introduce a working assumption.
\begin{assumption}\label{as:Marginal} Defining $\exp^M(\Phi_h)$ equivalently to $\exp^M(-\Phi_h)$, the potential $\Phi_h$ and the normalizing constants $Z$, $Z_h$ and $Z_h^M$ satisfy
	\begin{enumerate}
		\item\label{it:as_1} $\min\left\{\Em{\nu_h}{\norm{\exp(\Phi_h)}}_{L^\infty_{\mu_0}(X)}, \norm{\exp^M(\Phi_h)}_{L^\infty_{\mu_0}(X)}\right\} \leq C_1$, $\nu_h$-a.s.,
		\item $C_2^{-1} \leq Z \leq C_2$,
		\item\label{it:as_3} $C_3^{-1} \leq Z_h \leq C_3$, $\nu_h$-a.s.,
		\item\label{it:as_4} $C_4^{-1} \leq Z_h^M \leq C_4$, $\nu_h$-a.s.,
	\end{enumerate}
	for positive constants $\{C_i\}_{i=1}^4$ independent of $h$ and $M$. 
\end{assumption}
Let $\mathcal G_h$ have mean-square order of convergence $s$ with respect to the prior $\mu_0$. Then, it is possible to show under \cref{as:Marginal} that the measures $\mu_{h,\mathrm{m}}$ and $\mu_{h,\mathrm{a}}$ are good approximations of the true posterior $\mu$, in the sense that the Hellinger distance converges with order $s$ with respect to the discretization parameter $h \to 0$ \cite{LST18}. In this paper, we are interested in extending the results of \cite{LST18} to the approximation of the deterministic, but intractable, probability measures $\mu_{h,\mathrm{m}}$ and $\mu_{h,\mathrm{a}}$ by their Monte Carlo approximations given in \cref{def:MCApprox}. In particular, we prove in \cref{sec:Conv} the following convergence results.

\begin{theorem}\label{thm:ConvMarginal} Let $\mu_{h,\mathrm{m}}$ be the marginal posterior measure of \eqref{eq:Marginal} and let its Monte Carlo approximation $\mu_{h,\mathrm{m}}^M$ be given in \cref{def:MCApprox}. If the approximate forward map $\mathcal G_h$ has mean-square order of convergence $s$ with respect to the prior $\mu_0$ and \cref{as:Marginal} holds, then
	\begin{equation}
		\sqrt{\Em{\nu_h}{\Hell\left(\mu_{h,\mathrm{m}}, \mu_{h,\mathrm{m}}^M\right)^2}} \leq C\frac{h^s}{\sqrt{M}},
	\end{equation}
	for a positive constant $C$ independent of $h$ and $M$.
\end{theorem}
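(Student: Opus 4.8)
The plan is to carry out all computations with respect to the reference measure $\mu_0$, to which both $\mu_{h,\mathrm m}$ and $\mu_{h,\mathrm m}^M$ are absolutely continuous, and to split the Hellinger distance into a contribution from the unnormalized densities and one from the normalizing constants. Writing for brevity $g(u) \defeq \Em{\nu_h}{\exp(-\Phi_h(u))}$ and $\overbar Z \defeq \Em{\nu_h}{Z_h} = \int_X g \dd\mu_0$ for the numerator and normalizing constant of the marginal density, and $g^M(u) \defeq \exp^M(-\Phi_h(u))$ for the Monte Carlo numerator (so that $Z_h^M = \int_X g^M \dd\mu_0$), the two densities are $g/\overbar Z$ and $g^M/Z_h^M$. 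The structural fact driving the whole argument is that $g^M$ and $Z_h^M$ are \emph{unbiased} Monte Carlo estimators, $\Em{\nu_h}{g^M(u)} = g(u)$ and $\Em{\nu_h}{Z_h^M} = \overbar Z$, so that expected mean-square errors reduce to Monte Carlo variances carrying the factor $1/M$.

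First I would use the decomposition $\sqrt{g/\overbar Z} - \sqrt{g^M/Z_h^M} = (\sqrt g - \sqrt{g^M})/\sqrt{\overbar Z} + \sqrt{g^M}\left(1/\sqrt{\overbar Z} - 1/\sqrt{Z_h^M}\right)$ together with $(a+b)^2 \le 2a^2 + 2b^2$ and $\int_X g^M \dd\mu_0 = Z_h^M$, which gives
\begin{equation*}
2\Hell\left(\mu_{h,\mathrm m}, \mu_{h,\mathrm m}^M\right)^2 \le \frac{2}{\overbar Z}\int_X \left(\sqrt g - \sqrt{g^M}\right)^2 \dd\mu_0 + 2 Z_h^M\left(\frac{1}{\sqrt{\overbar Z}} - \frac{1}{\sqrt{Z_h^M}}\right)^2.
\end{equation*}
The normalizing-constant term equals $2(Z_h^M - \overbar Z)^2\big/\left(\overbar Z\,(\sqrt{\overbar Z} + \sqrt{Z_h^M})^2\right)$ and is thus bounded by $C(Z_h^M - \overbar Z)^2$ using the two-sided bounds on $\overbar Z$ and $Z_h^M$ supplied by \cref{it:as_3,it:as_4}. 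For the density term I would write $(\sqrt g - \sqrt{g^M})^2 = (g - g^M)^2/(\sqrt g + \sqrt{g^M})^2$ and bound the denominator below: the minimum in \cref{it:as_1} guarantees, $\nu_h$-a.s.\ and for $\mu_0$-a.e.\ $u$, that at least one of $g \ge C_1^{-1}$ or $g^M \ge C_1^{-1}$ holds---by Jensen's inequality for the former and by the Cauchy--Schwarz (arithmetic--harmonic) inequality applied to the sample for the latter---whence $\sqrt g + \sqrt{g^M} \ge C_1^{-1/2}$ and $(\sqrt g - \sqrt{g^M})^2 \le C_1(g - g^M)^2$.

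Taking $\Em{\nu_h}{\cdot}$ and exploiting unbiasedness, both terms collapse to per-sample Monte Carlo variances:
\begin{equation*}
\Em{\nu_h}{\left(g(u) - g^M(u)\right)^2} = \frac1M \Vm{\nu_h}{\exp(-\Phi_h(u))}, \qquad \Em{\nu_h}{\left(\overbar Z - Z_h^M\right)^2} = \frac1M \Vm{\nu_h}{Z_h}.
\end{equation*}
The heart of the proof is then the per-sample variance bound $\Vm{\nu_h}{\exp(-\Phi_h(u))} \le C h^{2s}$, and its analogue for $Z_h$. I would obtain it by dominating the variance by the mean-square deviation from the deterministic limit $\exp(-\Phi(u))$, with $\Phi(u) = \tfrac12\norm{\Gamma^{-1/2}(y - \mathcal G(u))}_Y^2$, namely $\Vm{\nu_h}{\exp(-\Phi_h(u))} \le \Em{\nu_h}{(\exp(-\Phi_h(u)) - \exp(-\Phi(u)))^2}$, combined with the key analytic observation that the map $G \mapsto \exp\left(-\tfrac12\norm{\Gamma^{-1/2}(y - G)}_Y^2\right)$ is globally Lipschitz on $Y$ with a constant $L$ depending only on $\Gamma$ (its gradient $\exp(-\tfrac12\norm{\Gamma^{-1/2}(y-G)}_Y^2)\,\Gamma^{-1}(y-G)$ has norm bounded by $\norm{\Gamma^{-1/2}}_{\mathrm{op}} \sup_{r \ge 0} r e^{-r^2/2} < \infty$, finite since $\dim Y < \infty$). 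This yields the pointwise bound $\abs{\exp(-\Phi_h(u)) - \exp(-\Phi(u))} \le L\norm{\mathcal G_h(u) - \mathcal G(u)}_Y$, and \cref{def:MSOrder} then gives $\Vm{\nu_h}{\exp(-\Phi_h(u))} \le L^2 \Em{\nu_h}{\norm{\mathcal G_h(u) - \mathcal G(u)}_Y^2} \le C h^{2s}$ for $\mu_0$-a.e.\ $u$. The same Lipschitz estimate together with Jensen's inequality on the probability measure $\mu_0$ bounds $\Vm{\nu_h}{Z_h} \le L^2 \int_X \Em{\nu_h}{\norm{\mathcal G_h(u) - \mathcal G(u)}_Y^2}\dd\mu_0(u) \le C h^{2s}$.

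Assembling the estimates and integrating the density term over $\mu_0$ by Tonelli's theorem, I arrive at $\Em{\nu_h}{\Hell(\mu_{h,\mathrm m}, \mu_{h,\mathrm m}^M)^2} \le C h^{2s}/M$, and taking square roots yields the claim. I expect the main obstacle to be precisely the per-sample variance bound: one must recognize that the rate $h^s$ stems entirely from the strong (mean-square) convergence of $\mathcal G_h$ and translate it into a variance estimate through the global Lipschitz bound on the exponentiated potential---a bound uniform in $\omega$ and $u$ and independent of any higher moments of $\mathcal G_h$, which is what makes the reduction to \cref{def:MSOrder} clean. A secondary subtlety is the correct use of the minimum in \cref{it:as_1}, which must be handled in both of its alternatives to secure the lower bound on $\sqrt g + \sqrt{g^M}$.
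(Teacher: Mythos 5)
Your proposal is correct and follows essentially the same route as the paper's proof: the same splitting of $\sqrt{\d\mu_{h,\mathrm{m}}/\d\mu_0}-\sqrt{\d\mu_{h,\mathrm{m}}^M/\d\mu_0}$ into a numerator term and a normalizing-constant term, the same use of the minimum in \cref{as:Marginal} (via Jensen and the arithmetic--harmonic inequality) to lower-bound $\sqrt{g}+\sqrt{g^M}$, and the same reduction, via unbiasedness and independence of the sample, to the per-sample variance bounds $\Vm{\nu_h}{\exp(-\Phi_h)}\leq Ch^{2s}$ and $\Vm{\nu_h}{Z_h}\leq Ch^{2s}$ of \cref{prop:Variance}. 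The one point where you deviate is in proving that variance bound: the paper passes through $\abs{\exp(-\Phi_h)-\exp(-\Phi)}\leq\abs{\Phi_h-\Phi}$ and then invokes the proof of \cite[Corollary 4.9]{Stu10}, whereas your observation that $G\mapsto\exp(-\tfrac12\norm{\Gamma^{-1/2}(y-G)}_Y^2)$ is globally Lipschitz (since $re^{-r^2/2}$ is bounded) is self-contained and avoids any implicit growth assumptions on $\mathcal G$ and $\mathcal G_h$.
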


\begin{theorem}\label{thm:ConvSample} Let $\mu_{h,\mathrm{a}}$ be the averaged posterior measure of \eqref{eq:Averaged} and let its Monte Carlo approximation $\mu_{h,\mathrm{a}}^M$ be given in \cref{def:MCApprox}. If the approximate forward map $\mathcal G_h$ has mean-square order of convergence $s$ with respect to the prior $\mu_0$ and \cref{as:Marginal} holds, then
	\begin{equation}
		\sqrt{\Em{\nu_h}{\Hell\left(\mu_{h,\mathrm{a}}, \mu_{h,\mathrm{a}}^M\right)^2}} \leq C\frac{h^s}{\sqrt{M}},
	\end{equation}
	for a positive constant $C$ independent of $h$ and $M$. 
\end{theorem}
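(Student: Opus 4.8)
The plan is to work with $\mu_0$ as the common reference measure and to exploit the fact that, at the level of densities, the Monte Carlo approximation is an \emph{unbiased} estimator of the averaged measure. Writing $\rho(u) \defeq \frac{\d\mu_{h,\mathrm a}}{\d\mu_0}(u) = \Em{\nu_h}{g(u)}$ with $g(u) \defeq \exp(-\Phi_h(u))/Z_h$, and $\rho^M(u) \defeq \frac{\d\mu_{h,\mathrm a}^M}{\d\mu_0}(u) = \frac1M\sum_{i=1}^M g^{(i)}(u)$, the summands $g^{(i)}$ are i.i.d. copies of $g$ satisfying $\Em{\nu_h}{g^{(i)}(u)} = \rho(u)$, so $\Em{\nu_h}{\rho^M(u)} = \rho(u)$ and, crucially, $\Vm{\nu_h}{\rho^M(u)} = M^{-1}\Vm{\nu_h}{g(u)}$ \emph{exactly}. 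First I would expand the squared Hellinger distance with reference $\mu_0$ and use Tonelli to exchange $\Em{\nu_h}{\cdot}$ with the integral over $X$, reducing the claim to a pointwise-in-$u$ estimate of $\Em{\nu_h}{(\sqrt{\rho(u)} - \sqrt{\rho^M(u)})^2}$.

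The second step is the elementary inequality $(\sqrt a - \sqrt b)^2 = (a-b)^2/(\sqrt a + \sqrt b)^2 \le (a-b)^2/a$, valid for $a,b \ge 0$. Taking $a = \rho(u)$ (deterministic) and $b = \rho^M(u)$ and averaging gives
\begin{equation}
  \Em{\nu_h}{\left(\sqrt{\rho(u)} - \sqrt{\rho^M(u)}\right)^2} \le \frac{\Vm{\nu_h}{\rho^M(u)}}{\rho(u)} = \frac{1}{M}\frac{\Vm{\nu_h}{g(u)}}{\rho(u)}.
\end{equation}
The whole problem therefore collapses to bounding the chi-square-type functional $\int_X \rho(u)^{-1}\Vm{\nu_h}{g(u)}\dd\mu_0(u)$ by $C h^{2s}$ with $C$ independent of $h$ and $M$; the explicit factor $M^{-1}$ then produces exactly the rate $h^s/\sqrt M$ after taking the square root, with a constant free of $M$ precisely because the i.i.d.\ structure gives the variance identity above with no higher-order corrections.

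Two ingredients remain: a lower bound on $\rho(u)$ and an upper bound on $\Vm{\nu_h}{g(u)}$. For the former, \cref{as:Marginal} controls $\exp(\Phi_h)$, and combining this with the Cauchy--Schwarz inequality $\Em{\nu_h}{\exp(-\Phi_h(u))}\Em{\nu_h}{\exp(\Phi_h(u))} \ge 1$ and the bounds on $Z_h$ yields $\rho(u) \ge C_3^{-1}\Em{\nu_h}{\exp(-\Phi_h(u))} \ge (C_1 C_3)^{-1}$ uniformly in $u$; this is exactly what keeps $\rho(u)^{-1}$ from degenerating in the prior tails. For the latter, since the variance is minimised at the mean I compare $g(u)$ to the \emph{deterministic} true density $\bar g(u) \defeq \exp(-\Phi(u))/Z$, bound $\Vm{\nu_h}{g(u)} \le \Em{\nu_h}{(g(u)-\bar g(u))^2}$, and split
\begin{equation}
  g(u) - \bar g(u) = \frac{\exp(-\Phi_h(u)) - \exp(-\Phi(u))}{Z_h} + \exp(-\Phi(u))\,\frac{Z - Z_h}{Z_h Z},
\end{equation}
using $\abs{e^{-a}-e^{-b}}\le\abs{a-b}$ for $a,b\ge 0$, the bounds on $Z,Z_h$ from \cref{as:Marginal}, and Cauchy--Schwarz on $\abs{Z-Z_h}\le\int_X\abs{\Phi-\Phi_h}\dd\mu_0$ to reduce both contributions to $\Em{\nu_h}{\abs{\Phi_h(u)-\Phi(u)}^2}$, pointwise and integrated.

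The main obstacle is the transfer of the mean-square order of $\mathcal G_h$ to the potential difference, uniformly enough in $u$ to survive integration against the Gaussian prior while keeping all constants independent of $M$. Because the potential is quadratic in the forward map, $\Phi_h(u) - \Phi(u)$ is controlled by $\norm{\mathcal G(u)-\mathcal G_h(u)}_Y$ times a factor growing polynomially in $\norm{\mathcal G(u)}_Y$, $\norm{\mathcal G_h(u)}_Y$ and $\norm{y}_Y$; squaring, averaging over $\nu_h$, and invoking \cref{def:MSOrder} gives $\Em{\nu_h}{\abs{\Phi_h(u)-\Phi(u)}^2} \le C(u)^2 h^{2s}$ with $C(u)$ of polynomial growth, the delicate point being the cross terms involving the random $\mathcal G_h(u)$, whose moments must be controlled through the mean-square bound and \cref{as:Marginal}. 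Combining this with the uniform lower bound on $\rho$ yields
\begin{equation}
  \int_X \frac{\Vm{\nu_h}{g(u)}}{\rho(u)}\dd\mu_0(u) \le C_1 C_3\, h^{2s}\int_X C(u)^2\dd\mu_0(u),
\end{equation}
which is finite by the polynomial growth of $C(u)$ and the Gaussian integrability of $\mu_0$. Inserting the factor $M^{-1}$ and taking square roots gives the stated estimate; note that the averaged measure is in this respect simpler than the ratio-of-averages estimator underlying \cref{thm:ConvMarginal}, since here each summand is already an unbiased draw of $\rho$.
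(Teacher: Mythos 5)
Your proposal is correct, and it takes a genuinely different --- and in fact more direct --- route than the paper. The paper rewrites $\d\mu_{h,\mathrm a}^M/\d\mu_0$ as a ratio $\expapp^M(-\Phi_h)/Z_h^M$ of a weighted likelihood estimator over a normalising constant, mirrors the two-term decomposition used for the marginal measure, and must then deal with the fact that $\expapp^M(-\Phi_h)$ is a \emph{biased} estimator: this costs a separate bias--variance analysis (\cref{prop:MSEExpApp}) on top of the variance bound for $\exp(-\Phi_h)/Z_h$ (\cref{lem:Variance_2}). You observe instead that the density $\rho^M = \frac1M\sum_{i=1}^M \exp(-\Phi_h^{(i)})/Z_h^{(i)}$ is already an exact, unbiased i.i.d.\ average of copies of $g=\exp(-\Phi_h)/Z_h$, whose mean is $\rho = \d\mu_{h,\mathrm a}/\d\mu_0$; after Tonelli and $(\sqrt a-\sqrt b)^2\le (a-b)^2/a$, the theorem then collapses to the single estimate $\Em{\mu_0}{\Vm{\nu_h}{\exp(-\Phi_h)/Z_h}}\le Ch^{2s}$ --- which is precisely \cref{lem:Variance_2}, and your splitting of $g-\bar g$ reproduces its proof --- together with a lower bound on $\rho$. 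This bypasses \cref{prop:MSEExpApp} entirely and makes transparent why no bias term survives and why the constant is independent of $M$. Two points to tighten. First, \cref{as:Marginal}\ref{it:as_1} bounds only the \emph{minimum} of $\Em{\nu_h}{\norm{\exp(\Phi_h)}_{L^\infty_{\mu_0}(X)}}$ and $\norm{\exp^M(\Phi_h)}_{L^\infty_{\mu_0}(X)}$, so your uniform bound $\rho\ge (C_1C_3)^{-1}$ uses a branch of the minimum that the hypothesis does not single out; to stay strictly within the stated assumption, use $(\sqrt a-\sqrt b)^2\le(a-b)^2/(a+b)\le(a-b)^2\min\{a^{-1},b^{-1}\}$ and H\"older, as the paper does in the marginal case. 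Second, the passage from $\Em{\nu_h}{(\Phi_h-\Phi)^2}$ to $Ch^{2s}$ hides the moments of the cross factor involving $\norm{2y-\mathcal G(u)-\mathcal G_h(\cdot,u)}_Y$; the paper delegates exactly this step to the proof of \cite[Corollary 4.9]{Stu10}, and you should do the same rather than carrying an unspecified polynomially growing $C(u)$, since \cref{def:MSOrder} gives a constant uniform in $u$, $\mu_0$-a.s. Neither point is a gap in the idea; both are repairs of bookkeeping.
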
 

Two remarks on the theorems above are due.

\begin{remark} Let us consider \cref{thm:ConvMarginal}, i.e., the convergence result for the marginal measure. Employing the results of \cite{LST18}, and by the triangle inequality, we obtain
\begin{equation}
	\sqrt{\Em{\nu_h}{\Hell\left(\mu, \mu_{h,\mathrm{m}}^M\right)^2}} \leq Ch^s\left(1 + \frac{1}{\sqrt{M}}\right).
\end{equation}
Hence, a priori and if $h$ is small enough, the measure $\mu_{h,\mathrm{m}}^M$ is a good approximation of the true posterior, regardless of the number of samples $M$ that is taken to approximate the expectation with respect to $\nu_h$ appearing in the marginal measure. The same consideration holds for the averaged measure. 
\end{remark}
\begin{remark} \cref{as:Marginal} could be relaxed by considering in \ref{it:as_1} a $L^p_{\mu_0}(X)$ norm for $p < \infty$, instead of the $L_{\mu_0}^\infty$ norm. Moreover, in \ref{it:as_3} and \ref{it:as_4}, we could have imposed boundedness in the $L^{q}_{\nu_h}(\Omega)$ norm for the random normalizing constants, with $q < \infty$, instead of a.s. boundedness. Results similar to \cref{thm:ConvMarginal,thm:ConvSample} would still hold, but we chose the bounded setting for clarity in the statement of the results and in their proofs.
\end{remark}

\section{Sampling from Approximate Posteriors}\label{sec:Sample}

In this section, we explore methods to sample from the posterior distributions presented above, and therefore solve approximately the inverse problem. In particular, let $\psi\colon X \to \R$ be a target function, and let us consider the task of approximating $\Psi \defeq \Em{\mu}{\psi}$, where $\mu$ is the posterior distribution given in \eqref{eq:Posterior}. We assume here that $\dim(X) < \infty$, and argue that this is not restrictive for practical implementation. Indeed, infinite-dimensional problems have to be approximated in finite-dimensional subspaces for practical purposes, which can be achieved e.g. by means of a Karhunen--Loève expansion (see e.g. \cite{Stu10}). The high-dimensionality of the unknown $u \in X$ hints to the use of a Monte Carlo technique, and specifically the approximation on $N$ equally weighted integration points
\begin{equation}\label{eq:MonteCarlo}
	\Psi \approx \widehat \Psi^N = \frac1N \sum_{i=1}^N \psi\left(u^{(i)}\right), \qquad \{u^{(i)}\}_{i=1}^N \sim \mu. 
\end{equation}
Since the problem \eqref{eq:BIP} is multi-dimensional and the normalization constant $Z$ in \eqref{eq:Posterior} is unknown, the sample $\{u^{(i)}\}_{i=1}^N$ in \eqref{eq:MonteCarlo} can be obtained employing a Markov chain Monte Carlo (MCMC) algorithm, such as the random walk Metropolis--Hastings (RWMH), which proceeds as described by \cref{alg:RWMH}. 

\begin{algorithm}[RWMH]\label{alg:RWMH} Let $Q = \mathcal N(0, C_Q)$ be a Gaussian proposal distribution on $X$ and $u^{(0)} \sim \mu_0$ where the prior is the Gaussian $\mu_0 = \mathcal N(0, C_0)$. For $i = 1, 2, \ldots, N$, generate  $\{u^{(i)}\}_{i=1}^N$ as
	\begin{enumerate} 
		\item Sample $\Delta u^{(i)} \sim Q$ and set $\widehat u^{(i)} \sim u^{(i-1)} + \Delta u^{(i)}$;
		\item Set $u^{(i)} = \widehat u^{(i)}$ with probability $\alpha$, and $u^{(i)} = u^{(i-1)}$ with probability $1-\alpha$, where
		\begin{equation}\label{eq:BIP_AcceptProb}
			\begin{aligned}
				\alpha &= \min\left\{\widehat \alpha, 1\right\}, \\
				\widehat \alpha &= \exp\left(-\Phi(\widehat u^{(i)}) + \Phi(u^{(i-1)}) -\frac12 \left(\widehat u^{(i)}, C_0^{-1} \widehat u^{(i)}\right) +\frac12 \left(u^{(i-1)}, C_0^{-1} u^{(i-1)}\right)\right).
			\end{aligned}
		\end{equation}
	\end{enumerate} 
\end{algorithm}

It is then known that the Markov chain $\{u^{(i)}\}_{i\geq 1}$ admits the posterior $\mu$ given in \eqref{eq:Posterior} as an invariant measure, and that therefore the RWMH eventually yields samples from the posterior distribution. The RWMH, or other similar sampling techniques, cannot be directly applied to the sample posterior distribution given in \eqref{eq:Sample}. Indeed, the likelihood function $\exp(-\Phi_h(u))$ computed with the probabilistic forward map $\mathcal G_h$ is a random variable and therefore intractable. In the remainder of this section, we review standard techniques which can be employed in this case.

\subsection{The Marginal Measure}

The marginal approximation $\mu_{h,\mathrm{m}}$ given in \eqref{eq:Marginal} is deterministic, but intractable due to the possibly high-dimensional integral $\Em{\nu_h}{\exp(-\Phi_h)}$. We now present two techniques to sample from $\mu_{h,\mathrm{m}}$: the pseudo-marginal Metropolis Hastings (PMMH) \cite{AnR09} and the Monte Carlo within Metropolis (MCwM) \cite{MLR16,AFE16,MRS20}, highlighting their respective advantages and disadvantages.

The PMMH yields an exact sample from the marginal measure by extending the state space $X$ to include all the random sources which are necessary to evaluate the probabilistic forward map. In practice, the algorithm is equivalent to the RWMH modulo a replacement of the unnormalized likelihood $\exp(-\Phi)$ by its Monte Carlo estimator $\exp^M(-\Phi_h)$. Recycling the value of the Monte Carlo estimator is indeed equivalent to making the Markov chain advance on the extend state space. Since the Monte Carlo estimator is unbiased, it is possible to prove (see \cite{AnR09}) that the marginal on $X$ of the measure targeted by the PMMH is exactly $\mu_{h,\mathrm{m}}$. For clarity, we include the full pseudo-code for the PMMH in \cref{alg:PMMH}.
  
\begin{algorithm}[PMMH]\label{alg:PMMH} Let $Q = \mathcal N(0, C_Q)$ be a Gaussian proposal distribution on $X$ and $u^{(0)} \sim \mu_0$ where the prior is the Gaussian $\mu_0 = \mathcal N(0, C_0)$. For $i = 1, 2, \ldots, N$, generate  $\{u^{(i)}\}_{i=1}^N$ as
	\begin{enumerate}
		\item Sample $\Delta u^{(i)} \sim Q$, set $\widehat u^{(i)} \sim u^{(i-1)} + \Delta u^{(i)}$;
		\item Compute $\exp^M(-\Phi(\widehat u^{(i)}))$ following \cref{def:MCApprox};
		\item Set $u^{(i)} = \widehat u^{(i)}$ with probability $\alpha$, and $u^{(i)} = u^{(i-1)}$ with probability $1-\alpha$, where
		\begin{equation}\label{eq:PMMH_AcceptProb}
			\begin{aligned}
				\alpha &= \min\left\{\widehat \alpha, 1\right\}, \\
				\widehat \alpha &= 
				\begin{aligned}[t]
					&\frac{\exp^M(-\Phi(\widehat u^{(i)}))}{\exp^M(-\Phi(u^{(i-1)}))} \exp\left(-\frac12 \left(\widehat u^{(i)}, C_0^{-1} \widehat u^{(i)}\right) +\frac12 \left(u^{(i-1)}, C_0^{-1} u^{(i-1)}\right)\right).
				\end{aligned}
			\end{aligned}
		\end{equation}
	\end{enumerate}
\end{algorithm}

The advantage of the PMMH is clearly that it targets exactly the posterior. The disadvantages, instead, are mainly two. The first, and clearest, is that the computational cost needed to obtain $N$ samples from the marginal posterior $\mu_{h,\mathrm{m}}$ with the PMMH is equivalent to $N\cdot M$ evaluations of the forward map. The second disadvantage is that the PMMH produces, in some situations, badly-behaved Markov chains. Indeed, if the Monte Carlo estimator of the unnormalized likelihood has a variance which is large with respect to the noise on the observations, then it is likely to observe a sticky behavior of the Markov chain, which may remain stuck in areas which are unlikely under $\mu_{h,\mathrm{m}}$. We remark that this effect is amplified in case the dimension of the data or of the parameter space is large. This second issue can be solved by increasing the number of samples $M$ in the computation of $\exp^M(-\Phi_h)$, and thus by reducing the variance of the estimator. Nevertheless, the value of $M$ required to obtain a well-behaved sample could be extremely high and lead to a high computational cost. 

The MCwM, whose pseudo-code is given in \cref{alg:MCwM}, is designed to solve the second issue presented above.

\begin{algorithm}[MCwM]\label{alg:MCwM} Let $Q = \mathcal N(0, C_Q)$ be a Gaussian proposal distribution on $X$ and $u^{(0)} \sim \mu_0$ where the prior is the Gaussian $\mu_0 = \mathcal N(0, C_0)$. For $i = 1, 2, \ldots, N$, generate  $\{u^{(i)}\}_{i=1}^N$ as
	\begin{enumerate}
		\item Sample $\Delta u^{(i)} \sim Q$, set $\widehat u^{(i)} \sim u^{(i-1)} + \Delta u^{(i)}$;
		\item\label{it:MCwM} Compute $\exp^M(-\Phi(\widehat u^{(i)}))$ and recompute $\exp^M(-\Phi(u^{(i-1)}))$ following \cref{def:MCApprox};
		\item Set $u^{(i)} = \widehat u^{(i)}$ with probability $\alpha$, and $u^{(i)} = u^{(i-1)}$ with probability $1-\alpha$, where
		\begin{equation}\label{eq:MCwM_AcceptProb}
			\begin{aligned}
				\alpha &= \min\left\{\widehat \alpha, 1\right\}, \\
				\widehat \alpha &= 
				\begin{aligned}[t]
					&\frac{\exp^M(-\Phi(\widehat u^{(i)}))}{\exp^M(-\Phi(u^{(i-1)}))} \exp\left(-\frac12 \left(\widehat u^{(i)}, C_0^{-1} \widehat u^{(i)}\right) +\frac12 \left(u^{(i-1)}, C_0^{-1} u^{(i-1)}\right)\right).
				\end{aligned}
			\end{aligned}
		\end{equation}
	\end{enumerate}
\end{algorithm}

We remark that the only difference between the PMMH and the MCwM consists in point \ref{it:MCwM} of both algorithms, where we compute the Monte Carlo estimator of the unnormalized likelihood. Indeed, in the former one recycles the value of the estimator, thus implicitly building a Markov chain on an extended state space, whereas in the second the estimator of the likelihood is computed for both the proposed sample and the previous state of the Markov chain on $X$. In this way, one avoids completely the sticky behavior of the PMMH and the sample produced by the MCwM is of good quality, provided a well-informed choice for the proposal distribution $Q$. The main downside of the MCwM, when compared to the PMMH, is that it does not target the exact marginal posterior $\mu_{h,\mathrm{m}}$, but a perturbed version of it. The distance between the perturbed measure targeted by the MCwM and the true marginal posterior can be quantified employing the tools of \cite{MLR16}. 

Comparing the computational cost that is needed to obtain a representative sample from $\mu_{h,\mathrm{m}}$ employing the PMMH and the MCwM is not a simple task. Let us first notice that for fixed $N$ and $M$ a run of the MCwM yields twice the computational cost as the PMMH, as the estimator of the unnormalized likelihood has to be recomputed for the previous state of the chain. For a small value of $M$, though, the acceptance ratio of the PMMH could be very close to zero, so that the sample resulting from the algorithm is not meaningful in any sense. Conversely, the MCwM outputs a meaningful sample for any value of $M$, drawn, though, from a measure that is close to the true marginal posterior only for large values of $M$. A numerical assessment of this trade-off between good quality of the sample vs the accuracy of the targeted measure is presented in \cref{sec:Linear}.

\subsection{The Averaged Measure}

Sampling form the averaged measure $\mu_{h,\mathrm{a}}$ entails less difficulties than the marginal measure, at least in the design of an algorithm which samples exactly from the posterior. Indeed, let us recall that the Monte Carlo approximation $\mu_{h,\mathrm{a}}^M$ of the averaged measure reads
\begin{equation}
	\mu_{h,\mathrm{a}}^M(\d u) = \frac1M \sum_{i=1}^M \mu_{h,\mathrm{s}}^{(i)}(\d u), \qquad \frac{\d \mu_{h,\mathrm{s}}^{(i)}}{\d \mu_0}(u) = \frac{\exp\left(-\Phi_h^{(i)}(u)\right)}{Z_h^{(i)}},
\end{equation}
where $Z_h^{(i)}$ are for $i = 1, \ldots, M$ the corresponding normalization constants. Approximate sampling from the measure $\mu_{h,\mathrm{a}}$ can therefore be achieved by employing a Metropolis within Monte Carlo (MwMC) approach, where one runs a independent RWMH chain for each realization of the randomized forward map. The MwMC approach is summarized by \cref{alg:MwMC}.

\begin{algorithm}[MwMC]\label{alg:MwMC} Let $Q = \mathcal N(0, C_Q)$ be a Gaussian proposal distribution on $X$ and $u^{(0)} \sim \mu_0$ where the prior is the Gaussian $\mu_0 = \mathcal N(0, C_0)$. For $i = 1, 2, \ldots, M$, 
	\begin{enumerate}
		\item\label{it:MwMC} Generate a random forward map $\mathcal G_h^{(i)}$ with corresponding potential $\Phi_h^{(i)}$;
		\item\label{it:MwMC_2} Obtain a sample $\{u_h^{(i,j)}\}_{j=1}^N$ from the posterior $\mu_{h,\mathrm{s}}^{(i)}$ employing \cref{alg:RWMH} with proposal $Q$, initial value $u^{(0)}$.
		\item Reassemble the sample as $\{u_h^{(i)}\}_{i=1}^{N\cdot M} = \bigcup_{i=1}^M \{u_h^{(i,j)}\}_{j=1}^N$.
	\end{enumerate}
\end{algorithm}

Let us comment on this algorithm. First, we remark that at termination we have a sample of size $N\cdot M$, instead of a sample of size $N$ as for the PMMH and the MCwM, for a comparable cost of $N \cdot M$ evaluations of the forward map. Moreover, due to the unbiasedness of the Monte Carlo estimator, the MwMC yields unbiased estimates of the quantity of interest for any given $M$. Nevertheless, for each realization of the forward map in step \ref{it:MwMC} of \cref{alg:MwMC} the samples are clustered in the posterior corresponding to the forward map $\mathcal G_h^{(i)}$, and are not representative of the whole averaged measure $\mu_{h,\mathrm{a}}$. If the forward map $\mathcal G_h$ is mean-square convergent, though, and $h$ is chosen small, then the posteriors targeted in step \ref{it:MwMC_2} are close to each other, and all close to the true posterior $\mu$. Hence, in this case, it may be unnecessary to choose $M$ large in order to obtain a sample that covers well the posterior $\mu_{h,\mathrm{a}}$. Numerical assessments of the performances of the MwMC are presented in \cref{sec:Linear}.

\section{Numerical Assessment: The Linear Case}\label{sec:Linear}

In this section, we present a numerical assessment of the Monte Carlo sampling strategies presented in \cref{sec:Sample} based on a linear test case, for which the marginal and the averaged posterior distributions are computable explicitly. In particular, let $m$ and $d$ be positive integers and let us consider the inverse problem
\begin{equation}\label{eq:LinGaussIP}
\text{find } u \in \R^d \text{ given observations } y = Au + \beta \in \R^m,
\end{equation}
where $A$ is a matrix in $\R^{m \times d}$, and where $\beta \sim \mathcal N(0, \Gamma)$ is Gaussian random variable with $\Gamma$ a non-singular covariance matrix on $\R^m$. Fixing a Gaussian prior $\mu_0 = \mathcal N(m_0, C_0)$ on the unknown yields in this case a Gaussian posterior $\mu = \mathcal N(m, C)$, where the posterior precision matrix and mean are given by
\begin{equation}\label{eq:LinGaussPost}
\begin{aligned}
C^{-1} &= A^\top \Gamma^{-1} A + C_0^{-1}, \\
m &= C\left(A^\top \Gamma^{-1} y + C_0^{-1} m_0\right),
\end{aligned}
\end{equation}
as shown in \cite[Example 6.23]{Stu10}. With the notation introduced above, we therefore have in this case $\mathcal G(u) = Au$. Let us assume that the matrix $A$ is known only up to an additive perturbation. In particular, we assume there exist $h > 0$ and a matrix $P \in \R^{m\times d}$ whose norm is bounded from above independently of $h$, such that the matrix $A_h = A + hP$ is known. Depending on the value $h > 0$, employing the matrix $A_h$ to solve the inverse problem \eqref{eq:LinGaussIP} would yield wrong and overconfident posterior distributions on the unknown $u$. 

Let us now consider the randomized forward map $\mathcal G_h \colon \omega \times u \mapsto A_hu + h\xi(\omega)$, where $\xi \sim \mathcal N(0, Q)$ for a a symmetric positive semi-definite covariance $Q \in \R^{m\times m}$ whose norm is bounded independently of $h$. For this forward map, the triangle inequality with respect to the $L^2_{\nu_h}(\Omega)$ norm yields
\begin{equation}
\Em{\nu_h}{\norm{\mathcal G_h(u) - \mathcal G(u)}_2^2}^{1/2} \leq h \left(\Eb{\norm{\xi}^2_2}^{1/2} + \norm{Pu}_2\right) = h \left(\sqrt{\trace(Q)} + \norm{Pu}_2\right),
\end{equation}
where $\trace(\cdot)$ is the trace operator and $\mu_0$-a.s. with respect to $u$. Hence, the forward map $\mathcal G_h$ is of first mean-square order of convergence with respect to $h$ in the sense of \cref{def:MSOrder}. We can rewrite the observation model replacing $\mathcal G$ with $\mathcal G_h$ as
\begin{equation}
A_h u + \beta = y - h \xi,
\end{equation}
which shows that the sample posterior $\mu_{h,\mathrm{s}}$ associated to $\mathcal G_h$ is simply the random Gaussian measure $\mu_{h,\mathrm{s}} = \mathcal N(m_{h,\mathrm{s}}, C_{h,\mathrm{s}})$, where
\begin{equation}\label{eq:LinGaussSample}
\begin{aligned}
C_{h,\mathrm{s}}^{-1} &= A_h^\top \Gamma^{-1} A_h + C_0^{-1}, \\
m_{h,\mathrm{s}} &= C_{h,\mathrm{s}}\left(A_h^\top \Gamma^{-1} (y - h\xi) + C_0^{-1} m_0\right).
\end{aligned}
\end{equation}
In the following, we give explicit expressions for the marginal and averaged measures defined in \cref{def:MCApprox} and present numerical experiments for the Monte Carlo techniques given in \cref{sec:Sample}.

\subsection{The Marginal Measure}

We first consider the marginal approximation $\mu_{h,\mathrm{m}}$, which can be computed explicitly as shown by the following result.

\begin{proposition}\label{prop:LinGaussMarg} With the notation introduced above, the marginal posterior on $\R^d$ associated to the randomized forward map $\mathcal G_h\colon \omega \times u \mapsto A_hu + h\xi(\omega)$, is given by $\mu_{h,\mathrm{m}} = \mathcal N(m_{h,\mathrm{m}}, C_{h,\mathrm{m}})$, where
	\begin{equation}
		\begin{aligned}
			C_{h,\mathrm{m}}^{-1} &= A_h^\top \Gamma_h^{-1} A_h + C_0^{-1}, \\
			m_{h,\mathrm{m}} &= C_{h,\mathrm{m}}\left(A_h^\top \Gamma_h^{-1} y + C_0^{-1} m_0\right),
		\end{aligned}
	\end{equation}
	where $\Gamma_h \defeq \Gamma + h^2 Q$ and where we recall that $\xi \sim \mathcal N(0, Q)$.
\end{proposition}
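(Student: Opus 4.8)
The plan is to compute the marginal likelihood $\Em{\nu_h}{\exp(-\Phi_h(u))}$ explicitly for each fixed $u$, recognize the result (up to a $u$-independent constant) as the unnormalized density of the posterior for a \emph{deterministic} linear-Gaussian inverse problem with an inflated noise covariance, and then invoke the conjugate formula of \cite[Example 6.23]{Stu10} already used for the exact posterior in \eqref{eq:LinGaussPost}. The key algebraic fact to establish is that averaging over the forward-map noise $\xi$ amounts to replacing $\Gamma$ by $\Gamma_h = \Gamma + h^2 Q$.

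First I would fix $u \in \R^d$ and set $r \defeq y - A_h u \in \R^m$, so that $\Phi_h(u) = \frac12 (r - h\xi)^\top \Gamma^{-1}(r - h\xi)$ with $\xi \sim \mathcal N(0, Q)$. Writing $\phi_\Sigma$ for the Lebesgue density of $\mathcal N(0, \Sigma)$, one has $\exp(-\Phi_h(u)) = (2\pi)^{m/2}\abs{\Gamma}^{1/2}\phi_\Gamma(r - h\xi)$, so that, since $h\xi \sim \mathcal N(0, h^2 Q)$, taking the expectation over $\xi$ produces a convolution of two centred Gaussian densities,
\begin{equation}
\Em{\nu_h}{\exp(-\Phi_h(u))} = (2\pi)^{m/2}\abs{\Gamma}^{1/2}\,(\phi_\Gamma * \phi_{h^2 Q})(r).
\end{equation}
Using the standard identity that $\phi_\Gamma * \phi_{h^2 Q} = \phi_{\Gamma + h^2 Q} = \phi_{\Gamma_h}$, and recalling that $\Gamma_h$ is non-singular because $\Gamma$ is, I obtain
\begin{equation}
\Em{\nu_h}{\exp(-\Phi_h(u))} = \left(\frac{\abs{\Gamma}}{\abs{\Gamma_h}}\right)^{1/2} \exp\left(-\frac12 (y - A_h u)^\top \Gamma_h^{-1}(y - A_h u)\right),
\end{equation}
where the prefactor does not depend on $u$.

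Substituting this into the definition \eqref{eq:Marginal} of $\mu_{h,\mathrm{m}}$, the $u$-independent prefactor cancels between the numerator and the denominator $\Em{\nu_h}{Z_h} = \int_X \Em{\nu_h}{\exp(-\Phi_h(u))}\dd\mu_0(u)$, since Fubini justifies moving the $\nu_h$-expectation inside the $\mu_0$-integral. What remains is exactly the unnormalized posterior density for the linear-Gaussian inverse problem with forward matrix $A_h$, data $y$, and noise covariance $\Gamma_h$. Applying \cite[Example 6.23]{Stu10} with the substitutions $A \mapsto A_h$ and $\Gamma \mapsto \Gamma_h$ in \eqref{eq:LinGaussPost} then yields $\mu_{h,\mathrm{m}} = \mathcal N(m_{h,\mathrm{m}}, C_{h,\mathrm{m}})$ with precisely the stated precision $C_{h,\mathrm{m}}^{-1} = A_h^\top \Gamma_h^{-1} A_h + C_0^{-1}$ and mean $m_{h,\mathrm{m}} = C_{h,\mathrm{m}}(A_h^\top \Gamma_h^{-1} y + C_0^{-1} m_0)$.

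I expect the only genuinely delicate point to be the rigorous justification of the Gaussian convolution when $Q$ is merely positive semi-definite: a direct completing-the-square computation would require $Q^{-1}$, which need not exist. Phrasing the averaging over $\xi$ as a convolution (equivalently, computing the expectation via the characteristic function of $\Gamma^{-1/2}(r - h\xi)$, which only requires $\Gamma$ invertible) sidesteps this issue entirely and delivers the covariance inflation $\Gamma \mapsto \Gamma_h = \Gamma + h^2 Q$ for any admissible $Q$.
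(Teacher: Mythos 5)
Your proof is correct and follows the same overall architecture as the paper's: compute the marginal likelihood $\Em{\nu_h}{\exp(-\Phi_h(u))}$, show it is proportional (with a $u$-independent constant) to $\exp\bigl(-\tfrac12 \norm{\Gamma_h^{-1/2}(y-A_hu)}_2^2\bigr)$ with $\Gamma_h = \Gamma + h^2 Q$, and conclude via \cite[Example 6.23]{Stu10}. The one genuine difference is how the Gaussian average over $\xi$ is evaluated. The paper writes the expectation as an integral against the density of $\mathcal N(0,Q)$, i.e.\ with a term $\tfrac12\norm{Q^{-1/2}\xi}_2^2$ in the exponent, and completes the square; this implicitly requires $Q$ to be non-singular, whereas the setting only assumes $Q$ symmetric positive \emph{semi}-definite. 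Your convolution identity $\phi_\Gamma * \phi_{h^2Q} = \phi_{\Gamma_h}$ (equivalently, the characteristic-function computation) needs only $\Gamma$ invertible and so covers degenerate $Q$ without modification — a small but real gain in generality over the paper's own argument, at essentially no extra cost. The remaining steps (cancellation of the $u$-independent prefactor between numerator and denominator, the Fubini exchange identifying $\Em{\nu_h}{Z_h}$ as the correct normalizer, and the substitution $A \mapsto A_h$, $\Gamma \mapsto \Gamma_h$ in \eqref{eq:LinGaussPost}) match the paper's reasoning.
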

\begin{proof} We use the symbol $\propto$ to denote equality up to a proportionality constant independent of $u$. It holds
	\begin{equation}
		\Em{\nu_h}{\exp(-\Phi_h(u))} \propto \int_{\R^m} \exp\left(-\frac12 \norm{\Gamma^{-1/2}(y - A_hu - h\xi))}^2_2 - \frac12 \norm{Q^{-1/2}\xi}_2^2\right) \dd \xi.
	\end{equation}
	Completing the square inside the integral to get a Gaussian density and algebraic simplifications yields
	\begin{equation}
	\begin{aligned}
		\Em{\nu_h}{\exp(-\Phi_h(u))} \propto \exp\left(-\frac12\norm{\Gamma_h^{-1/2}(y - A_hu)}_2^2\right), 
	\end{aligned}
	\end{equation}
	where $\Gamma_h$ is given in the statement above. Hence, 
	\begin{equation}
		\frac{\d \mu_{h,\mathrm{m}}}{\d\mu} = \frac1{\Em{\nu_h}{Z_h}} \exp\left(-\frac12\norm{\Gamma_h^{-1/2}(y - A_hu)}_2^2\right).
	\end{equation}
	Since $\Em{\nu_h}{Z_h}$ is the normalization constant and hence independent of $u$, this proves the desired result by \cite[Example 6.23]{Stu10}. 
\end{proof}

\begin{remark} The same result of \cref{prop:LinGaussMarg} could be obtained by rewriting the observation model as
	\begin{equation}
		y = A_hu + \beta_h, \qquad \beta_h \defeq \beta + h\xi \sim \mathcal N(0, \Gamma_h),
	\end{equation}
	i.e., by modifying the noise to take into account the randomization of the approximated forward map.
\end{remark}

\subsubsection*{Numerical Experiment}

\begin{figure}[t]
	\centering
	\begin{tabular}{c@{\hskip 1cm}c}
		\multicolumn{2}{c}{\includegraphics[]{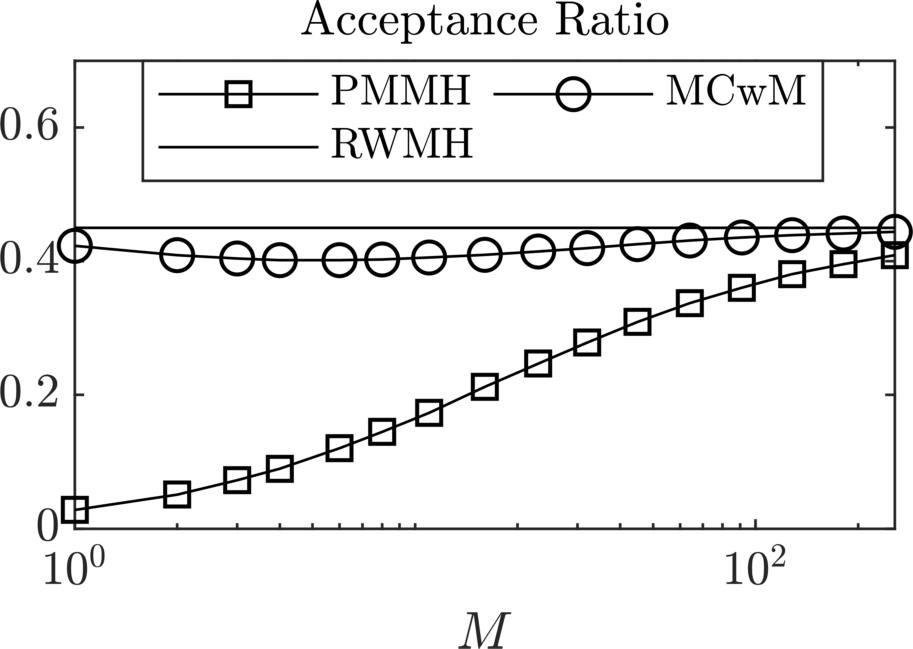}} \vspace{0.3cm} \\
		\includegraphics[]{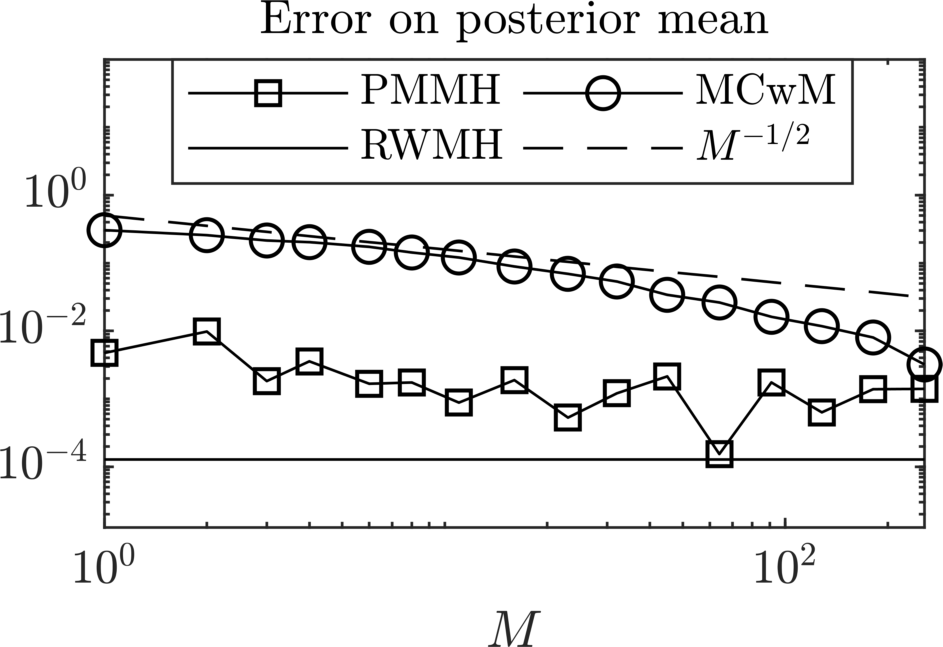} & \includegraphics[]{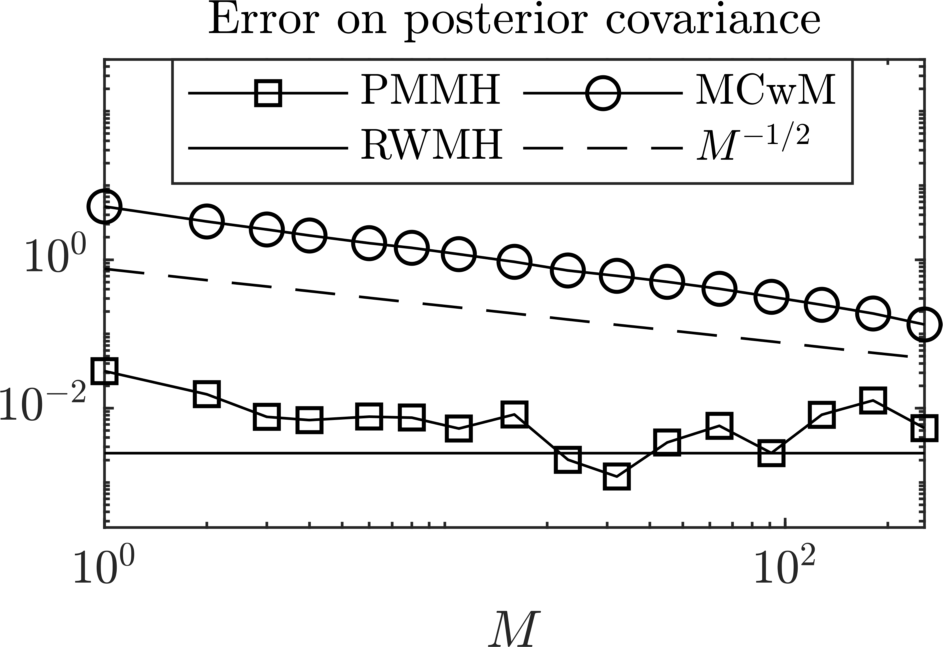} \vspace{0.3cm} \\
	\end{tabular}
	\caption{Acceptance ratio, error on posterior mean and covariance for the PMMH and the MCWM targeting the marginal posterior $\mu_{h,\mathrm{m}}$ as a function of the number of Monte Carlo samples $M$. In all plots, the horizontal dashed line corresponds to the RWMH directly targeting the same distribution. For this experiment, we fix $N = 10^6$, $h = 0.25$, and $\sigma = 0.1$.}
	\label{fig:Marg_M}
\end{figure}

\begin{figure}[h!]
	\centering
	\begin{tabular}{c@{\hskip 1cm}c}
		\multicolumn{2}{c}{\includegraphics[]{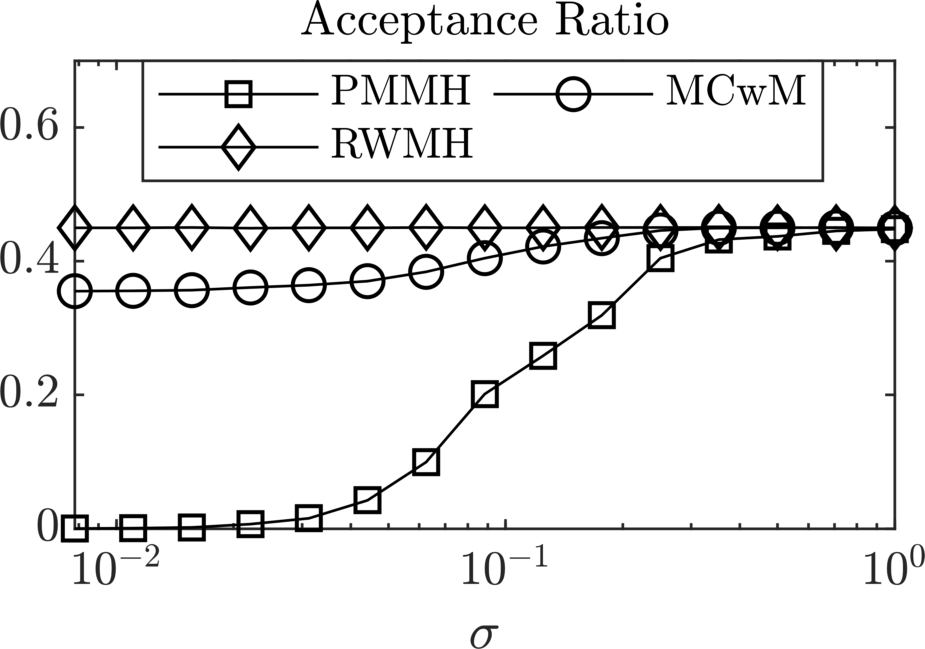}} \vspace{0.3cm} \\
		\includegraphics[]{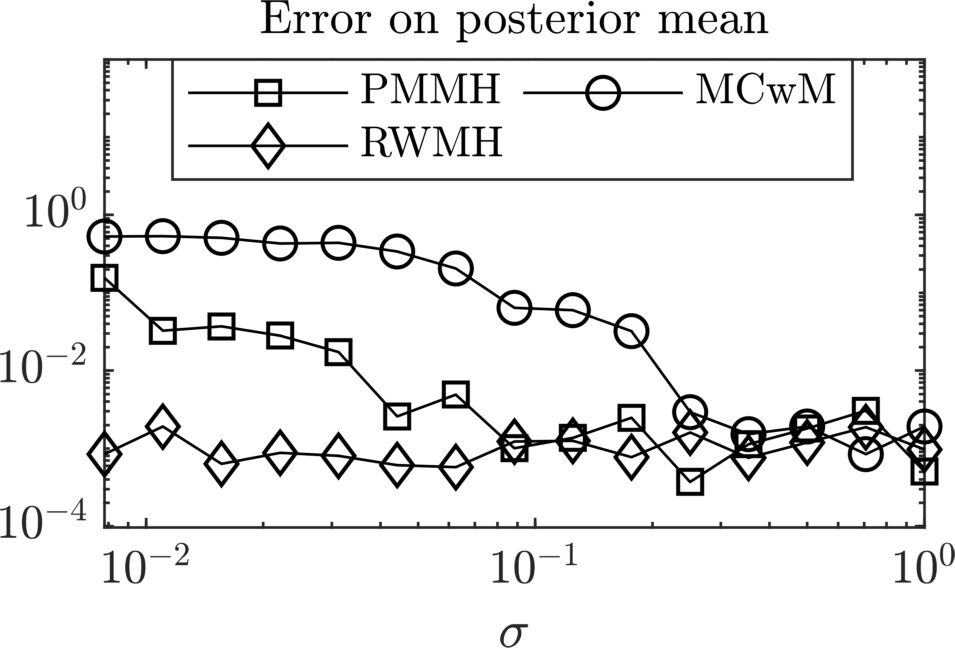} & \includegraphics[]{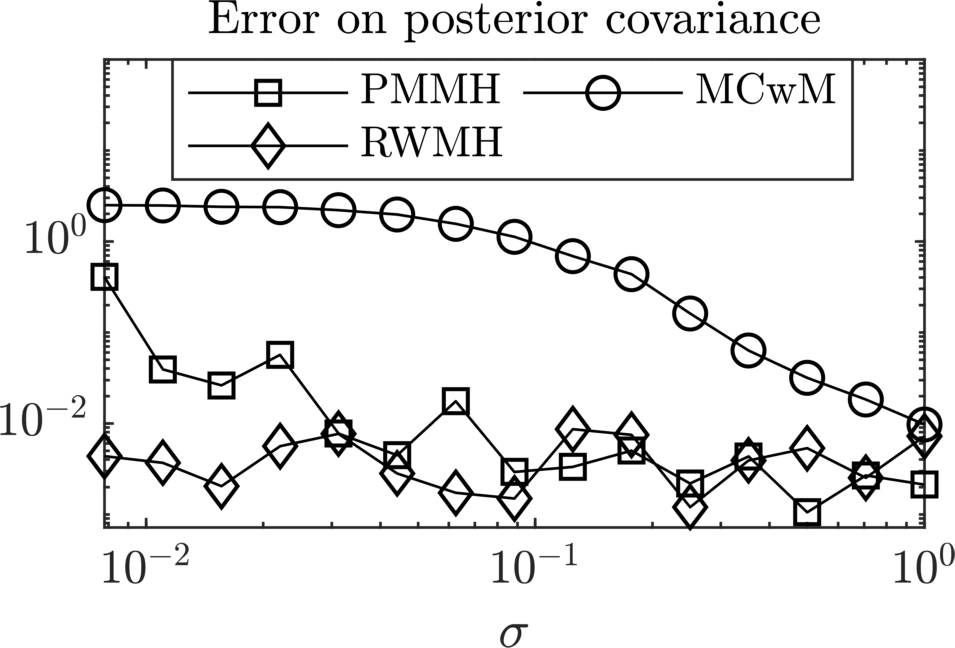} \vspace{0.3cm} \\
	\end{tabular}
	\caption{Acceptance ratio, error on posterior mean and covariance for the PMMH and the MCWM targeting the marginal posterior $\mu_{h,\mathrm{m}}$ as a function of the observation noise scale $\sigma$, compared with the RWMH targeting the same distribution. For this experiment, we fix $N = 10^6$, $h = 0.25$, and $M = 16$.}
	\label{fig:Marg_sigma}
\end{figure}

\begin{figure}[h!]
	\centering
	\begin{tabular}{c@{\hskip 1cm}c}
		\multicolumn{2}{c}{\includegraphics[]{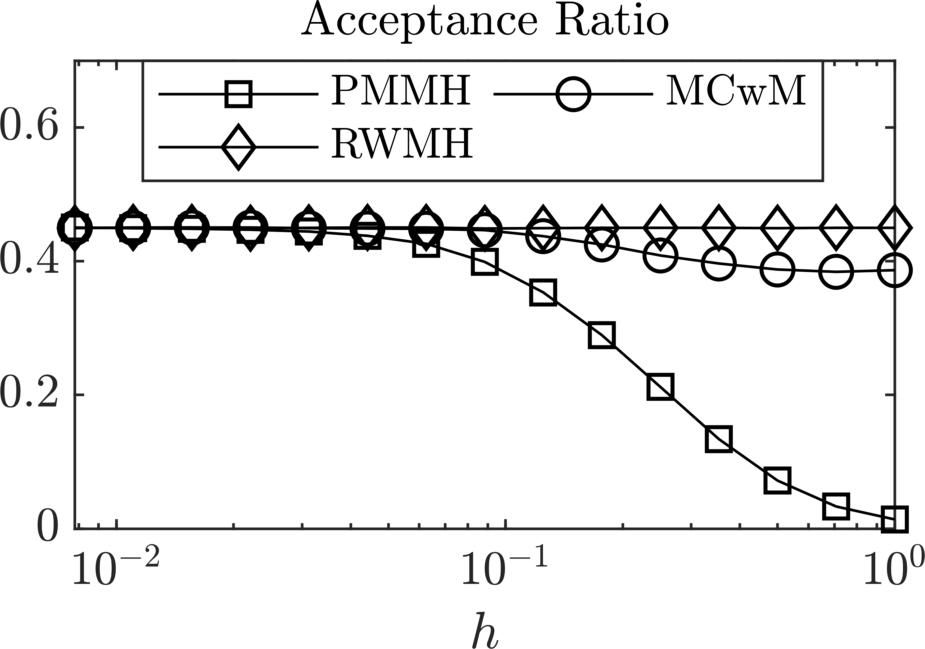}} \vspace{0.3cm} \\
		\includegraphics[]{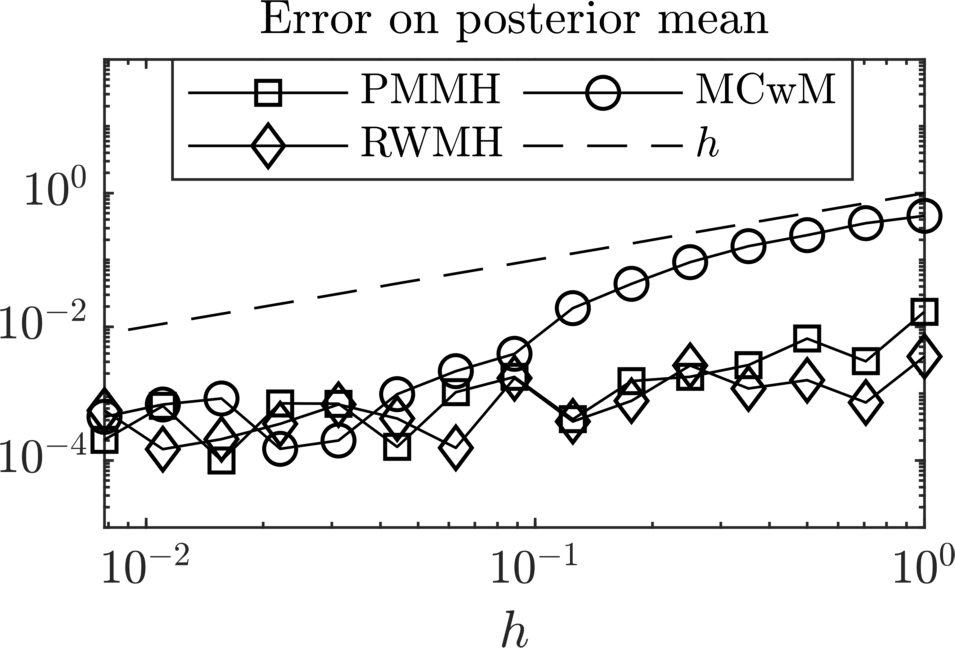} & \includegraphics[]{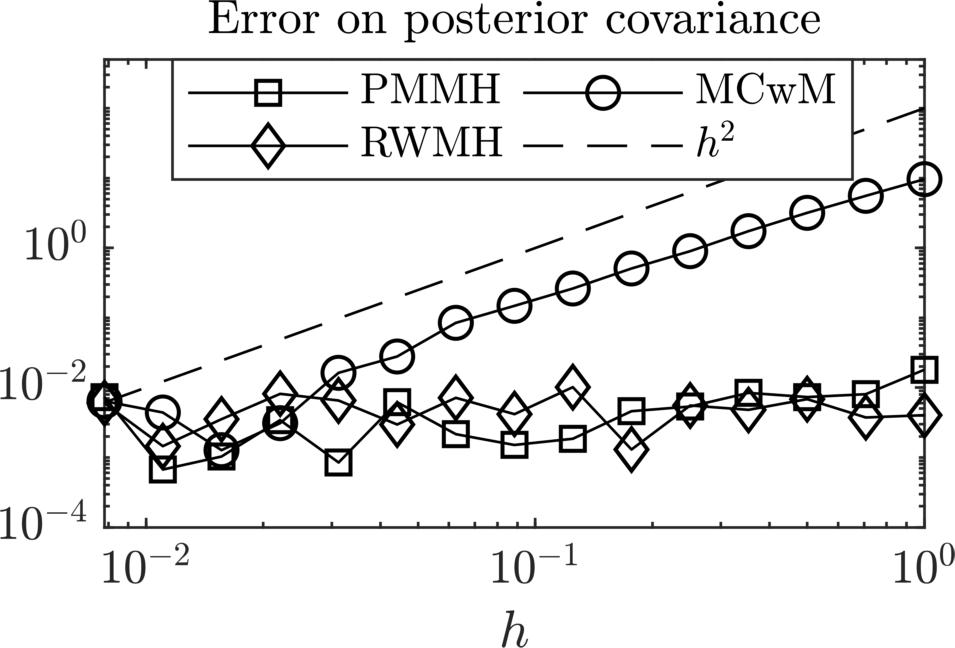} \vspace{0.3cm} \\
	\end{tabular}
	\caption{Acceptance ratio, error on posterior mean and covariance for the PMMH and the MCWM targeting the marginal posterior $\mu_{h,\mathrm{m}}$ as a function of the discretization parameter $h$, compared with the RWMH targeting the same distribution. For this experiment, we fix $N = 10^6$, $M = 16$, and $\sigma = 0.1$.}
	\label{fig:Marg_h}
\end{figure}

We consider the input and output dimensions to be equal and given by $d = m = 3$, and the matrix $A \in \R^{3\times 3}$ defining the exact forward map to have random entries distributed uniformly in $(-1,1)$. The approximated matrix $A_h$ is then given by $A_h = A + hI$, where $I$ is the $3\times 3$ identity matrix, and the randomized forward map by $\mathcal G_h(u) = A_hu + h\xi$, for $\xi \sim \mathcal N(0, I)$ and $u \in \R^3$. Observations are then obtained as $y = Au^\dagger + \beta$, where $u^\dagger = (1,2,3)^\top$ and $\beta \sim \mathcal N(0, \Gamma)$, with $\Gamma = \sigma^2I$. We are interested in comparing the performances of the PMMH and the MCwM when targeting the marginal posterior $\mu_{h,\mathrm{m}}$. Since in this case the exact marginal distribution is known by \cref{prop:LinGaussMarg}, we can compare the performances of the two algorithms with the RWMH targeting the posterior and with a Gaussian proposal $Q$ with covariance $C_Q = C_{h,\mathrm{m}}$. This same proposal mechanism is employed for the PMMH and for the MCWM. In order to have a negligible error due to the MCMC sampling, we generate for all methods chains of length $N = 10^6$. We then consider the impact on the PMMH and the MCwM of (i) the number of samples $M$ in the Monte Carlo averages to approximate the expectation of the likelihood, (ii) the observational noise scale $\sigma$ and (iii) the discretization parameter $h$. Results show that:
\begin{enumerate}
	\item The quality of the Markov chain generated by the PMMH, in terms of acceptance ratio, is degenerate if the number of samples $M$ is small, while for the MCwM the quality of the sample is robust with respect to $M$. On the other hand, the approximation of the posterior mean and covariance obtained with the PMMH is comparable -- due to the large size of the sample -- to the one of plain RWMH, while for MCwM a small sample size for the inner Monte Carlo average has a detrimental effect on the approximation of $\mu_{h,\mathrm{m}}$. We observe numerically that the posterior mean and covariance seem to converge to the marginal mean and covariance, for the MCwM, with a rate $M^{-1/2}$. Results are given in \cref{fig:Marg_M};
	\item A similar degeneracy of the MCMC sample for the PMMH as above is noticeable if the observational noise scale $\sigma$ is small, while again the MCwM does not suffer of such a negative effect. As above, though, the MCwM fails on sampling from the marginal posterior in case $\sigma$ is small. Results are given in \cref{fig:Marg_sigma};
	\item Again, the PMMH suffers from chain degeneracy in case the discretization parameter $h$ is too big. Still, if $M$ and $N$ are big enough, the approximation of the posterior is comparable to plain RWMH for any $h$, while for the MCwM results improve as $h$ gets smaller. We observe numerically that the posterior mean and covariance predicted by the MCwM converge towards the true marginal mean and covariance with rates $h$ and $h^2$, respectively. Results are given in \cref{fig:Marg_h}.
\end{enumerate}
Summarizing, it is advisable to employ the PMMH in case the randomized forward map is not too expensive to evaluate, as in this case one can increase the number of samples $M$ and obtain a reasonably-behaved sample from the exact marginal posterior. Moreover, the ratio between the discretization typical size and the observational noise scale plays a relevant role, and leads to well-behaved Markov chains in case $h/\sigma \ll 1$. On the other hand, the MCwM can be employed with a few samples in the likelihood approximation to obtain a sample from an inexact measure regardless of $h$ and $\sigma$. The targeted measure is though close to the true marginal posterior in case $h$ is small enough, or $\sigma$ is large. We refer the reader to \cite{MLR16,MRS20} for a deeper discussion on how to tune the MCwM efficiently.

\subsection{The Averaged Measure}
We now consider the averaged approximation $\mu_{h,\mathrm{a}}$ of $\mu$ given by the randomized forward map $\mathcal G_h$ defined above. In order to compute its closed-form expression, we first need a result on Gaussian random measures.

\begin{lemma}\label{lem:RandGauss} Let $\xi \sim \nu$ where $\nu=\mathcal N(m_\xi, C_\xi)$ is a $\R^m$-valued Gaussian random variable, and consider the random Gaussian measure $\mu(\xi) = \mathcal N(F(\xi), C)$ on $\R^d$, where $F \colon \R^m \to \R^d$ is an affine transformation such that $F\colon \xi \mapsto F_0 \xi + F_1$, with $F_0 \in \R^{d\times m}$ and $F_1 \in \R^m$. Then
	\begin{equation}
		\Em{\nu}{\mu(\cdot)} = \mathcal N(F(m_\xi), C + F_0 C_\xi F_0^\top). 
	\end{equation}
\end{lemma}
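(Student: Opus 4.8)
The plan is to recognize the averaged measure $\Em{\nu}{\mu(\cdot)}$ as the marginal law of an auxiliary random vector coming from a two-stage (hierarchical) sampling scheme, and then to exploit the stability of the Gaussian family under affine maps and convolution. By definition, for any Borel set $B \subseteq \R^d$ one has $\Em{\nu}{\mu(\cdot)}(B) = \int_{\R^m} \mu(\xi)(B) \dd\nu(\xi)$. Since $\mu(\xi) = \mathcal N(F_0\xi + F_1, C)$ is precisely the conditional law of the vector $X \defeq F_0 \xi + F_1 + \eta$ given $\xi$, where $\eta \sim \mathcal N(0, C)$ is drawn independently of $\xi$, the averaged measure is exactly the unconditional law of $X$.

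First I would observe that $X$ is an affine image of the jointly Gaussian vector $(\xi^\top, \eta^\top)^\top$; indeed
\begin{equation}
	X = \begin{pmatrix} F_0 & I \end{pmatrix} \begin{pmatrix} \xi \\ \eta \end{pmatrix} + F_1,
\end{equation}
and $(\xi, \eta)$ is Gaussian because its two blocks are independent Gaussians. Hence $X$ is Gaussian, and it remains only to read off its first two moments. Using linearity of the expectation and the independence of $\xi$ and $\eta$, so that $\eta$ contributes nothing to the mean and the cross-covariance vanishes, I would compute $\Eb{X} = F_0 m_\xi + F_1 = F(m_\xi)$ and $\mathrm{Cov}(X) = F_0 C_\xi F_0^\top + C$, which is the claimed covariance.

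An equivalent, fully self-contained route avoids the conditional-law identification and computes the characteristic function of the averaged measure directly. Writing the density of $\Em{\nu}{\mu(\cdot)}$ as the $\nu$-average of the Gaussian densities of $\mathcal N(F_0\xi + F_1, C)$ and integrating against $e^{\mathrm i t^\top x}$, Fubini gives $\Em{\nu}{\exp(\mathrm i t^\top(F_0\xi + F_1) - \frac12 t^\top C t)}$; substituting the characteristic function of $\nu = \mathcal N(m_\xi, C_\xi)$ evaluated at $F_0^\top t$ and collecting terms yields $\exp(\mathrm i t^\top F(m_\xi) - \frac12 t^\top(C + F_0 C_\xi F_0^\top) t)$, which by uniqueness of the characteristic function identifies the averaged measure as the asserted Gaussian.

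I do not expect any genuine obstacle here: the result is essentially the closure of the Gaussian family under affine transformations and convolution, and both arguments reduce to the same two-moment computation. The only points requiring mild care are bookkeeping, namely ensuring $F_1 \in \R^d$ (so that $F_0\xi + F_1$ is well defined, correcting the apparent typo in the statement) and assembling the cross term $F_0 C_\xi F_0^\top$ from the correct block of the joint covariance, together with a routine Fubini justification in the characteristic-function route, which is immediate since the Gaussian densities are jointly integrable.
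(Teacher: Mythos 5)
Your proposal is correct, and it takes a genuinely different route from the paper. The paper first reduces without loss of generality to the case $F_0 = I$, $F_1 = 0$, $m = d$ (deferring the general case to the standard rule for affine images of Gaussians), and then computes the Lebesgue density of $\Em{\nu}{\mu(\cdot)}$ explicitly: it writes the averaged density as the convolution-type integral of the two Gaussian densities, completes the square in $\xi$, recognizes a Gaussian integral, and invokes Fubini to identify the density of the average with the average of the densities. Your first argument replaces this calculation with the latent-variable identification $\Em{\nu}{\mu(\cdot)} = \mathrm{Law}(F_0\xi + F_1 + \eta)$ with $\eta \sim \mathcal N(0, C)$ independent of $\xi$, so that Gaussianity follows from $X$ being an affine image of the jointly Gaussian pair $(\xi, \eta)$ and only the two moments need to be read off; your second argument does the same via characteristic functions. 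What your approach buys is that it handles the general affine $F$ directly, needs no completion of the square, and makes the independence structure (hence the absence of a cross term in the covariance) transparent; what the paper's approach buys is an explicit density-level computation that mirrors, almost verbatim, the completing-the-square argument it reuses in the proof of \cref{prop:LinGaussMarg}, so the two proofs read uniformly. You are also right that $F_1$ should lie in $\R^d$ rather than $\R^m$ for $F_0\xi + F_1$ to be well defined; this is a typo in the statement.
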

\begin{proof} We consider without loss of generality the case $m=d$ and $F(\xi) = \xi$, i.e., $F_0 = I$ and $F_1 = 0$, where $0$ is the zero matrix in $\R^{d\times d}$. The desired result then follows from the usual formula for affine transformations of Gaussian random variables. Let $\lambda$ be the Lebesgue measure on $\R^d$. For all $u \in \R^d$ it holds
	\begin{equation}
		\Em{\nu}{\frac{\d \mu(\cdot)}{\d \lambda}(u)} \propto \int_{\R^d} \exp\left(-\frac12 \norm{C^{-1/2}(u - \xi)}^2_2 - \frac12 \norm{C_\xi^{-1/2}(\xi - m_\xi)}_2^2\right) \dd \lambda(\xi). 
	\end{equation} 
	Completing the square inside the integral yields after algebraic simplifications and recognizing a Gaussian integral with respect to $\xi$
	\begin{equation}
		\Em{\nu}{\frac{\d \mu(\cdot)}{\d \lambda}(u)} \propto \exp\left(-\frac12\norm{(C+C_\xi)^{-1/2}(u-m_\xi)}_2^2\right).
	\end{equation}
	We then notice that Fubini's theorem gives
	\begin{equation}
		\frac{\d \Em{\nu}{\mu(\cdot)}}{\d \lambda}(u) = \Em{\nu}{\frac{\d \mu(\cdot)}{\d \lambda}(u)},
	\end{equation}
	which proves the desired result.
\end{proof}

We can now compute the averaged approximation $\mu_{h,\mathrm{a}}$.

\begin{proposition} With the notation introduced above, the average sample posterior on $\R^d$ associated to $\mathcal G_h\colon u \mapsto A_hu + h\xi$ is the Gaussian measure $\mu_{h,\mathrm{a}} = \mathcal N(m_{h,\mathrm{a}}, C_{h,\mathrm{a}})$, where
	\begin{equation}
		\begin{aligned}
			&C_{h,\mathrm{a}} = C_{h,\mathrm{s}} + h^2C_{h,\mathrm{s}}A_h^\top \Gamma^{-1}Q \Gamma^{-1} A_h C_{h,\mathrm{s}},  \\
			&m_{h,\mathrm{a}} = C_{h,\mathrm{s}}\left(A_h^\top \Gamma^{-1} y + C_0^{-1} m_0\right),
		\end{aligned}
	\end{equation}
	where $C_{h,s}$ is the covariance of the sample distribution $\mu_{h,\mathrm{s}}$ given in \eqref{eq:LinGaussSample}.
\end{proposition}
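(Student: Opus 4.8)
The plan is to recognize that the averaged measure is precisely the expectation of the random Gaussian measure $\mu_{h,\mathrm{s}}$ over the randomness $\xi$, and then to invoke \cref{lem:RandGauss} directly. Indeed, comparing the definition \eqref{eq:Averaged} with the expression \eqref{eq:Sample} for the Radon--Nikodym derivative of $\mu_{h,\mathrm{s}}$, one sees that $\mu_{h,\mathrm{a}} = \Em{\nu_h}{\mu_{h,\mathrm{s}}}$. Since \eqref{eq:LinGaussSample} shows that $\mu_{h,\mathrm{s}} = \mathcal N(m_{h,\mathrm{s}}, C_{h,\mathrm{s}})$ with a covariance $C_{h,\mathrm{s}}$ that is deterministic (it does not depend on $\xi$), the random measure $\mu_{h,\mathrm{s}}$ falls exactly into the template of \cref{lem:RandGauss}: a fixed covariance together with a mean that is a random vector.

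The key preliminary step is to exhibit $m_{h,\mathrm{s}}$ as an affine function of $\xi$. Expanding the expression for $m_{h,\mathrm{s}}$ in \eqref{eq:LinGaussSample} and isolating the $\xi$-dependent term, I would write $m_{h,\mathrm{s}} = F(\xi) = F_0 \xi + F_1$ with
\begin{equation}
	F_0 = -h\, C_{h,\mathrm{s}} A_h^\top \Gamma^{-1}, \qquad F_1 = C_{h,\mathrm{s}}\left(A_h^\top \Gamma^{-1} y + C_0^{-1} m_0\right).
\end{equation}
Since $\xi \sim \mathcal N(0, Q)$, the relevant instance of \cref{lem:RandGauss} has $m_\xi = 0$ and $C_\xi = Q$. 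The mean of the averaged measure is then $F(m_\xi) = F_1$, which is precisely the claimed $m_{h,\mathrm{a}}$, noting that $C_{h,\mathrm{s}}$ is indeed the leading coefficient appearing in the stated mean.

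For the covariance, \cref{lem:RandGauss} gives $C_{h,\mathrm{a}} = C_{h,\mathrm{s}} + F_0 Q F_0^\top$, and it remains to simplify the correction term. Substituting $F_0$ and using that both $\Gamma$ and $C_{h,\mathrm{s}}$ are symmetric, so that $(\Gamma^{-1})^\top = \Gamma^{-1}$ and $C_{h,\mathrm{s}}^\top = C_{h,\mathrm{s}}$, one finds
\begin{equation}
	F_0 Q F_0^\top = h^2\, C_{h,\mathrm{s}} A_h^\top \Gamma^{-1} Q \,\Gamma^{-1} A_h C_{h,\mathrm{s}},
\end{equation}
which matches the stated $C_{h,\mathrm{a}}$. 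There is no genuine obstacle in this argument: the only care required is in correctly identifying the affine map $F$ and in tracking the transposes when forming $F_0 Q F_0^\top$, both of which are routine once \cref{lem:RandGauss} is applied to the random mean of $\mu_{h,\mathrm{s}}$.
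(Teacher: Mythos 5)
Your proposal is correct and follows exactly the paper's own argument: it identifies $m_{h,\mathrm{s}}$ as the affine map $F_0\xi + F_1$ with $F_0 = -hC_{h,\mathrm{s}}A_h^\top\Gamma^{-1}$ and $F_1 = C_{h,\mathrm{s}}(A_h^\top\Gamma^{-1}y + C_0^{-1}m_0)$, and then applies \cref{lem:RandGauss}. You even spell out the computation of $F_0QF_0^\top$, which the paper leaves implicit.
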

\begin{proof} The result is a direct consequence of \cref{lem:RandGauss}. Indeed, we have that the random variable $\xi$ is distributed following the measure $\nu = \mathcal N(0, Q)$ and
	\begin{equation}
	\begin{aligned}
		&m_{h,s} = F_0 \xi + F_1, \\
		&F_0 \defeq -hC_{h,s}A_h^\top \Gamma^{-1}, \quad F_1 \defeq C_{h,s}\left(A_h^\top \Gamma^{-1}y + C_0^{-1}m_0\right),
	\end{aligned}
	\end{equation}
	which proves the desired result.
\end{proof}

\begin{remark} In this linear setting, the Monte Carlo approximation $\mu_{h,\mathrm{a}}^M$ of the averaged measure is by definition a Gaussian mixture. Indeed, we have for any realization $\xi^{(i)}$ of the random variable $\xi$ in the forward model $\mathcal G_h$ that the associated posterior $\mu_{h,\mathrm{a}}^{(i)} \sim \mathcal N(m_{h,\mathrm{a}}^{(i)}, C_{h,\mathrm{a}}^{(i)})$, with
 	\begin{equation}
	\begin{aligned}
		&C_{h,\mathrm{a}}^{(i)} = C_{h,\mathrm{s}}, \\
		&m_{h,\mathrm{a}}^{(i)} = C_{h,\mathrm{s}}\left(A_h^\top \Gamma^{-1} (y - h\xi^{(i)}) + C_0^{-1} m_0\right).
	\end{aligned}
	\end{equation}
	Hence, for a random variable $u \sim \mu_{h,\mathrm{a}}^{M}$ has mean $m_{h,\mathrm{a}}^M$ and covariance $C_{h,\mathrm{a}}^M$ given by
	\begin{equation}
	\begin{aligned}
		&m_{h,\mathrm{a}}^M = \frac1M \sum_{i=1}^M m_{h,\mathrm{a}}^{(i)}, \\
		&C_{h,\mathrm{a}}^M = \frac1M \sum_{i=1}^M \left(C_{h,\mathrm{a}}^{(i)} + (m_{h,\mathrm{a}}^{(i)} - m_{h,\mathrm{a}}^M)(m_{h,\mathrm{a}}^{(i)} - m_{h,\mathrm{a}}^M)^\top\right).
	\end{aligned}
	\end{equation}
	In this case, the mean and covariance clearly do not fully characterize the distribution $\mu_{h,\mathrm{a}}^M$.
\end{remark}

\subsubsection*{Numerical Experiment}

\begin{figure}[t]
	 \centering
	\begin{tabular}{ccc}
		\includegraphics[]{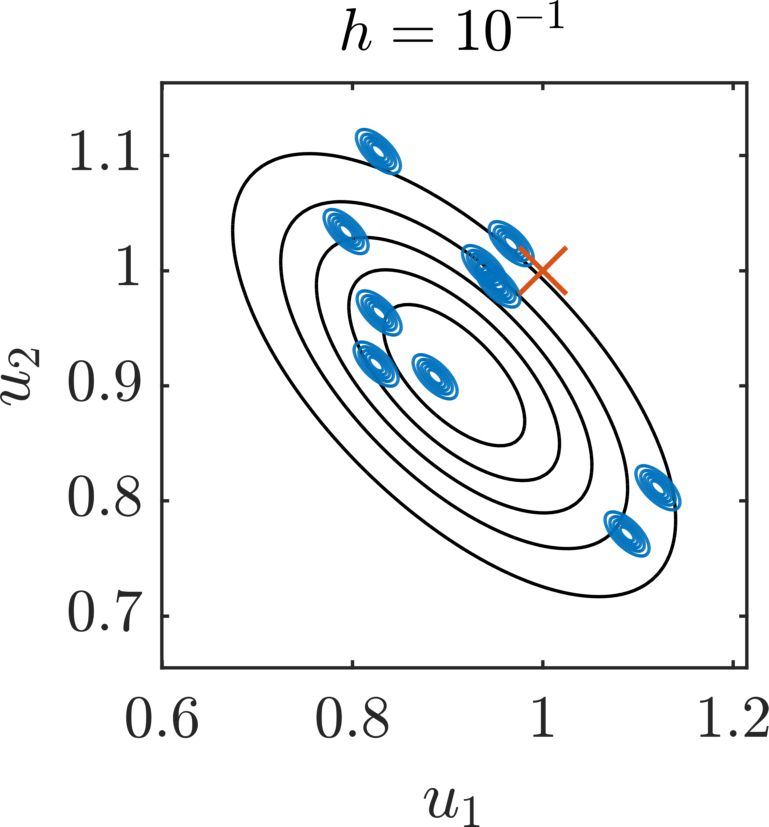} & \includegraphics[]{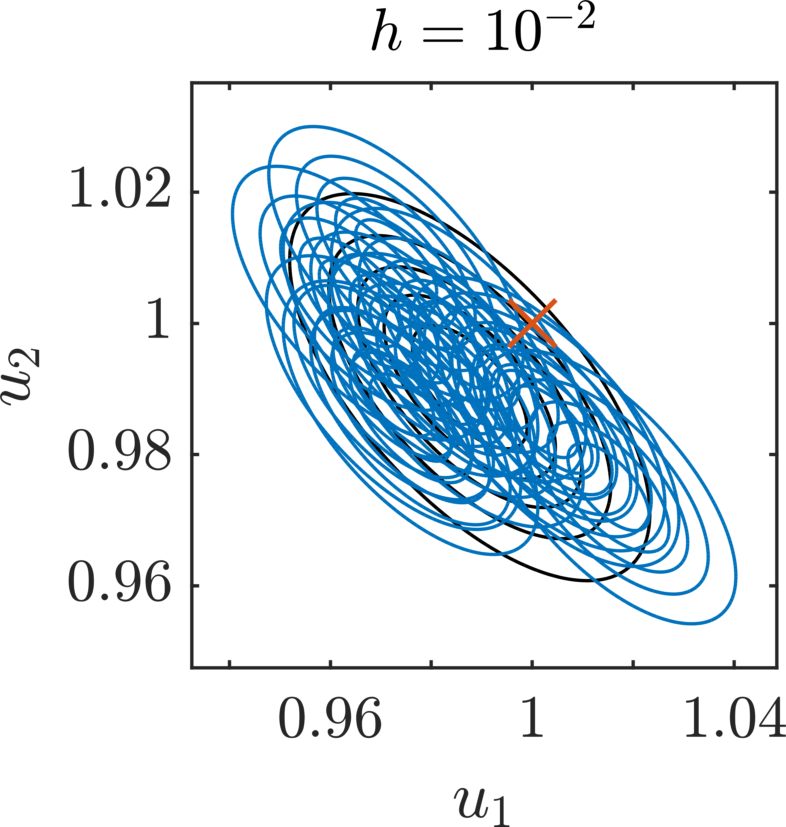} & \includegraphics[]{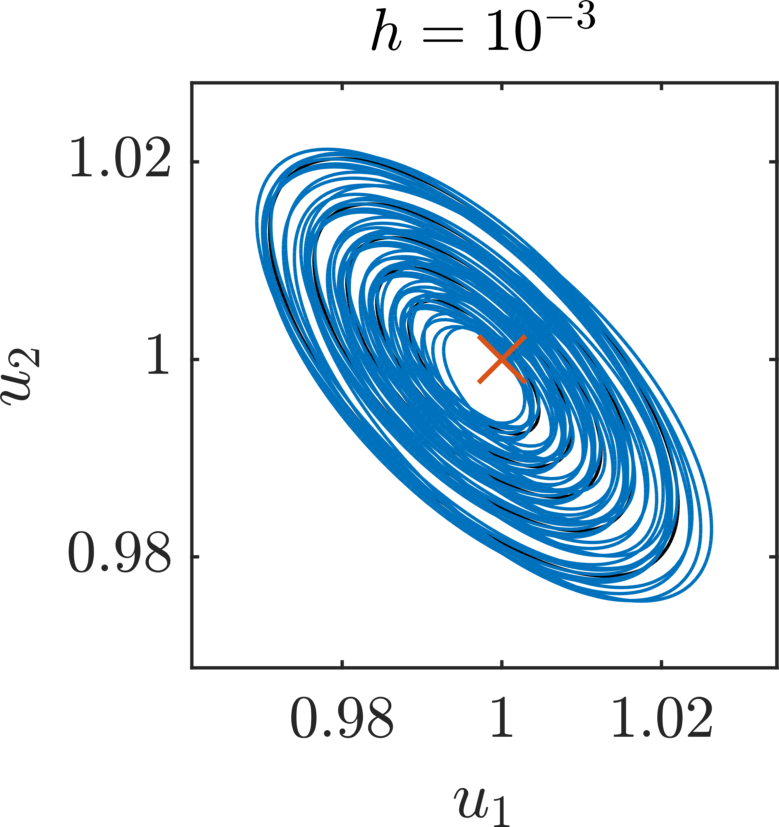}
	\end{tabular}
	\caption{Contour plot of the Monte Carlo approximation $\mu_{h,\mathrm{a}}^M$ (in blue) of the averaged measure $\mu_{h,\mathrm{a}}$ (in black) for three values of the discretization parameter $h$. The red cross denotes the true value of the parameter $u^\dagger$.}
	\label{fig:sam_M}
\end{figure}

\begin{figure}[t]
	\centering
	\begin{tabular}{cc}
		\includegraphics[]{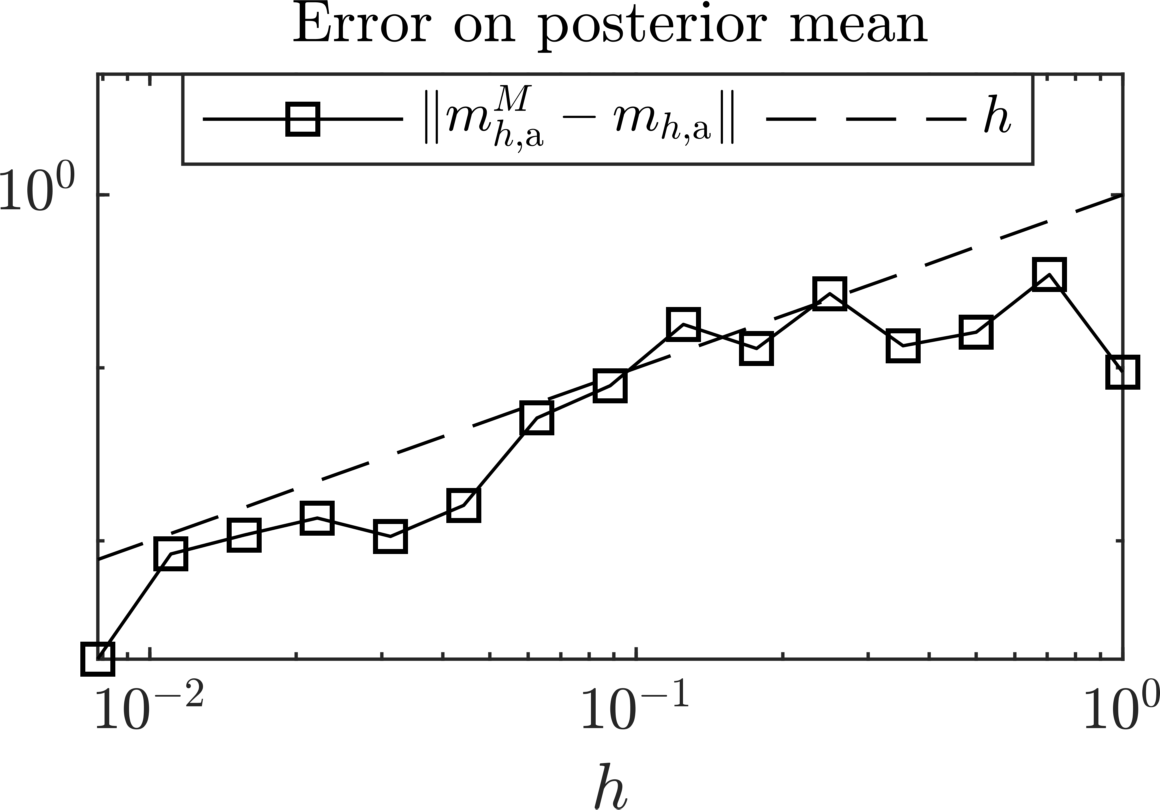} & \includegraphics[]{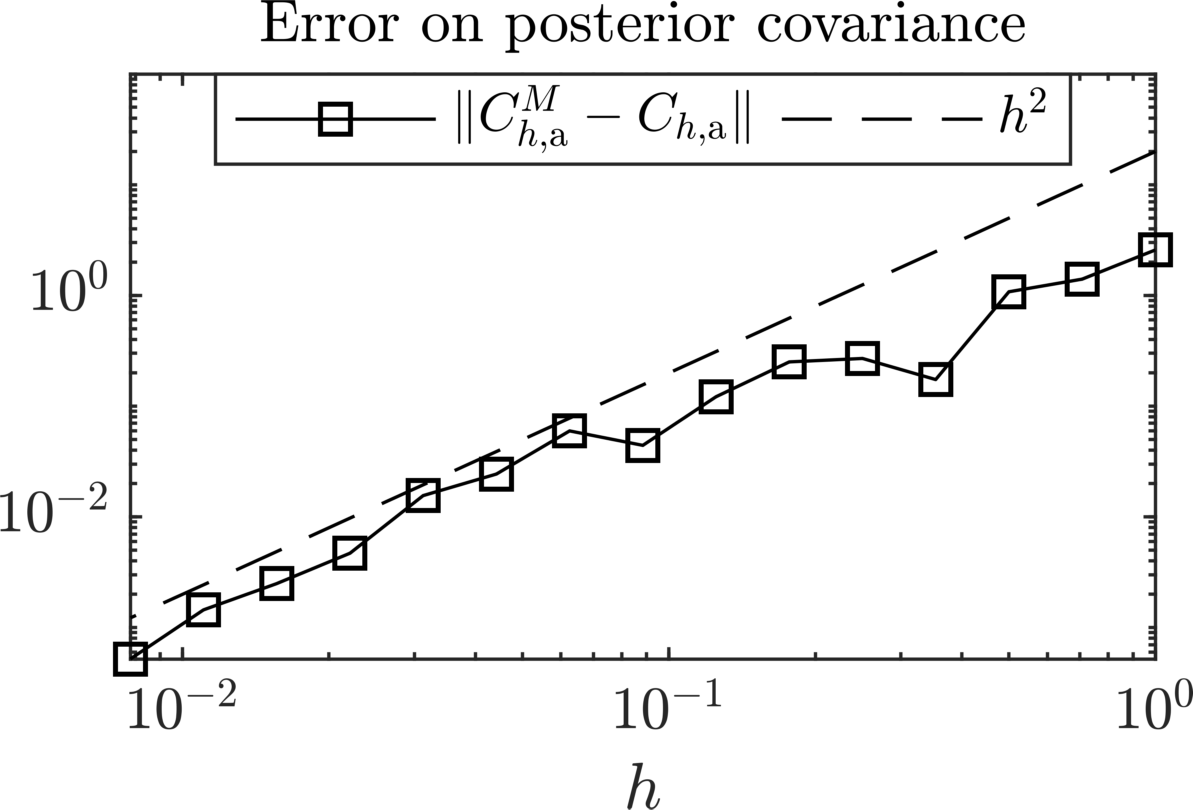}
	\end{tabular}
	\caption{Convergence of the posterior mean and covariance of the Monte Carlo approximation $\mu_{h,\mathrm{a}}^M$ of the averaged posterior $\mu_{h,\mathrm{a}}$ with respect to $h$ for a fixed value of $M = 16$.}
	\label{fig:sam_h}
\end{figure}

Unlike the marginal posterior, we argue that comparing performances of MCMC sampling methods in the context of Monte Carlo approximations of the averaged posterior would be irrelevant for this linear test case. Indeed, for any realization of the forward map, i.e., for any $\mathcal G^{(i)}(u) = A_hu + h\xi^{(i)}$, where $i = 1, \ldots, M$ and an i.i.d. sample $\{\xi^{(i)}\}$ such that $\xi^{(1)} \sim \mathcal N(0, Q)$, the RWMH can be simply employed to obtain a well-behaved sample from the corresponding posterior distribution. Tempering techniques could be employed for sampling directly from the multi-modal distribution $\mu_{h,\mathrm{a}}^M$ as well \cite{LMN21}. It is relevant, instead, to directly consider the approximation that the measure $\mu_{h,\mathrm{a}}^M$ yields of the true averaged posterior $\mu_{h,\mathrm{a}}$ for such a sample. For this purpose, we consider $d = m = 2$ and $A \in \R^{2\times 2}$ with random entries chosen uniformly in $(-1, 1)$. The approximated matrix $A_h$ is given by $A_h = A + hI$, and the randomized forward map is given by $\mathcal G_h(u) = A_hu + h\xi$, with $\xi \sim \mathcal N(0, I)$. Observations are generated as $y = Au^\dagger + \beta$, with $u^\dagger = (1,2)^\top$ and $\beta \sim \mathcal N(0, \Gamma)$, with $\Gamma = 10^{-4}I$. In \cref{fig:sam_M} we show how choosing a fixed value $M = 10$ for the Monte Carlo approximation of the averaged posterior leads to completely posterior qualities depending on the value of the discretization parameter $h$. Indeed, we see that for $h = 0.1$ a small number of samples leads to a posterior which does not ``cover'' satisfactorily the averaged posterior, while for $h = 10^{-3}$ each sample $\xi^{(i)}$ is associated to a posterior which is, for practical purposes, equal to the averaged posterior $\mu_{h,\mathrm{a}}$. The approximation of $\mu_{h,\mathrm{a}}$ by $\mu_{h,\mathrm{a}}^M$ for a fixed $M$, and a variable $h$, is highlighted again in \cref{fig:sam_M}, where we observe numerically that the convergence in the mean and the covariance is of order $h$ and $h^2$, respectively.

\section{Convergence Analysis}\label{sec:Conv}

Our goal in this section is presenting the proofs of \cref{thm:ConvMarginal,thm:ConvSample}, which are the core of our theoretical investigation. Before presenting the proofs themselves, we consider in the next \cref{sec:Likelihood} properties of Monte Carlo approximations of the likelihood function in the context of mean-square convergent randomized approximations.

\subsection{Approximation of the Likelihood}\label{sec:Likelihood}

In this section, we consider Monte Carlo approximations of the approximated random likelihood function $\exp(-\Phi_h)$ and of the random normalization constant $Z_h$. Let us remark that the properties proved in this section are related to the forward map.

We first consider the quantities involved in the marginal posterior distribution $\mu_{h,\mathrm{m}}$, i.e., the expectation of the likelihood $\exp(-\Phi_h)$ and of the normalization constant $Z_h$ with respect to the randomization of the forward model, as well as their Monte Carlo estimators. Since the Monte Carlo estimators are unbiased, we consider the approximation in terms of the variance under the measure $\nu_h$. 

\begin{proposition}\label{prop:Variance} With the notation of \cref{def:MCApprox} and if the probabilistic method $\mathcal G_h$ has mean-square order $s$ in the sense of \cref{def:MSOrder}, it holds
	\begin{alignat}{2}
		&\Vm{\nu_h}{\exp(-\Phi_h)} \leq C h^{2s}, &&\quad \mu_0\text{-a.s.},\label{eq:VarLik}\\
		&\Vm{\nu_h}{Z_h} \leq C h^{2s},  \label{eq:VarConst}
	\end{alignat}
	for a positive constant $C$ independent of $h$. Moreover, the Monte Carlo estimators satisfy
	\begin{alignat}{2}
		&\Vm{\nu_h}{\exp^M(-\Phi_h)} \leq C \frac{h^{2s}}{M}, &&\quad \mu_0\text{-a.s.},\\
		&\Vm{\nu_h}{Z_h^M} \leq C \frac{h^{2s}}{M},
	\end{alignat}
	for the same constant $C > 0$. 
\end{proposition}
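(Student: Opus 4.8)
The plan is to prove the pointwise likelihood bound \eqref{eq:VarLik} first and then derive the other three estimates from it essentially for free. For \eqref{eq:VarConst}, I would write $Z_h - \Em{\nu_h}{Z_h} = \int_X\big(\exp(-\Phi_h(u)) - \Em{\nu_h}{\exp(-\Phi_h(u))}\big)\dd\mu_0(u)$ and, since $\mu_0$ is a probability measure, apply Jensen's inequality $\big(\int_X f\dd\mu_0\big)^2 \le \int_X f^2\dd\mu_0$ followed by Fubini's theorem to obtain
\begin{equation}
	\Vm{\nu_h}{Z_h} \le \int_X \Vm{\nu_h}{\exp(-\Phi_h(u))}\dd\mu_0(u) \le Ch^{2s}.
\end{equation}
The two Monte Carlo estimates are even more immediate: $\exp^M(-\Phi_h(u))$ and $Z_h^M$ are empirical averages of $M$ i.i.d.\ copies of $\exp(-\Phi_h(u))$ and $Z_h$, so the identity $\V\!\big[\tfrac1M\sum_{i=1}^M X^{(i)}\big] = \tfrac1M\V[X]$ for i.i.d.\ samples reduces them to the first two bounds with the gain of a factor $1/M$.

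Everything therefore hinges on \eqref{eq:VarLik}. Here I would exploit that the variance is the minimal mean-square deviation from any constant, choosing as reference the deterministic likelihood $\exp(-\Phi(u))$ associated with the true map $\mathcal G$:
\begin{equation}
	\Vm{\nu_h}{\exp(-\Phi_h(u))} \le \Em{\nu_h}{\big(\exp(-\Phi_h(u)) - \exp(-\Phi(u))\big)^2}.
\end{equation}
Because both potentials are nonnegative and $x\mapsto e^{-x}$ is $1$-Lipschitz on $[0,\infty)$, the integrand is controlled by $(\Phi_h(u)-\Phi(u))^2$, so the problem reduces to bounding $\Em{\nu_h}{(\Phi_h(u)-\Phi(u))^2}$ by $Ch^{2s}$.

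To estimate the potential difference I would expand the squared norm. Setting $d \defeq \mathcal G(u) - \mathcal G_h(u)$ for the pointwise forward-map error, a direct computation gives
\begin{equation}
	\Phi_h(u) - \Phi(u) = \big(\Gamma^{-1}(y - \mathcal G(u)), d\big) + \tfrac12\norm{\Gamma^{-1/2}d}_Y^2,
\end{equation}
a term linear in $d$ plus a term quadratic in $d$. Squaring, using Cauchy--Schwarz and taking the $\nu_h$-expectation, the linear part contributes a multiple of $\Em{\nu_h}{\norm{d}_Y^2}$, which is exactly $\le Ch^{2s}$ by the mean-square order of \cref{def:MSOrder}, the deterministic factor $\norm{\Gamma^{-1}(y-\mathcal G(u))}_Y$ being finite $\mu_0$-a.s.

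The hard part will be the quadratic term, whose square generates the fourth moment $\Em{\nu_h}{\norm{d}_Y^4}$ (and, via the cross term, $\Em{\nu_h}{\norm{d}_Y^3}$), which is not controlled by the second-moment hypothesis alone. I expect to close this by using a.s.\ boundedness of the forward-map error --- as provided in the present setting and by \cref{as:Marginal}, through which $\norm{\mathcal G_h}_Y$, and hence $\norm{d}_Y$, is essentially bounded --- so that $\norm{d}_Y^4 \le R^2\,\norm{d}_Y^2$ for a constant $R$ and the higher moments inherit the $h^{2s}$ rate; alternatively, whenever these moments genuinely scale like $h^{4s}$ one has $h^{4s}\le h^{2s}$ for $h$ small since $s>0$. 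In either case the quadratic and cross terms are dominated by $Ch^{2s}$ and combine with the linear term to yield \eqref{eq:VarLik}. Making this moment control rigorous under the stated hypotheses, rather than merely verifying it on the linear test case of \cref{sec:Linear}, is the delicate point of the argument.
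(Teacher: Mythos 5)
Your overall architecture coincides with the paper's: the bound for $Z_h$ via Jensen's inequality and Fubini's theorem, the reduction of the two Monte Carlo estimates to the $M=1$ case by independence, the variance-minimization step $\Vm{\nu_h}{\exp(-\Phi_h)} \le \Em{\nu_h}{\left(\exp(-\Phi_h)-\exp(-\Phi)\right)^2}$, and the Lipschitz bound $\abs{\exp(-\Phi_h)-\exp(-\Phi)} \le \abs{\Phi_h - \Phi}$ are exactly the steps taken in \cref{sec:Likelihood}. The only divergence is the final estimate of $\Em{\nu_h}{(\Phi_h-\Phi)^2}$. You expand $\Phi_h - \Phi$ additively into a term linear in $d = \mathcal G - \mathcal G_h$ plus a term quadratic in $d$, so that squaring produces $\Em{\nu_h}{\norm{d}_Y^4}$, which, as you rightly flag, is not controlled by \cref{def:MSOrder}. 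The paper instead uses the difference-of-squares factorization behind \cite[Corollary 4.9]{Stu10}: writing $\Phi - \Phi_h = \tfrac12\left(\Gamma^{-1/2}(2y - \mathcal G - \mathcal G_h), \Gamma^{-1/2}(\mathcal G_h - \mathcal G)\right)$ gives $\abs{\Phi_h - \Phi} \le C\norm{\Gamma^{-1/2}(\mathcal G_h - \mathcal G)}_Y$, with $C$ absorbing the locally bounded factor $\norm{\Gamma^{-1/2}(2y-\mathcal G-\mathcal G_h)}_Y$; squaring then yields exactly the second moment covered by the mean-square order hypothesis, and no higher moments ever appear. Both routes require some uniform control on the forward maps beyond the bare mean-square order, but the paper's version needs it only as a multiplicative Lipschitz constant (the \cite[Assumption 2.7]{Stu10}-type condition already invoked for well-posedness of $\mu$), whereas yours needs it to convert a fourth moment into a second one. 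Your two fallbacks do not quite close this: item \ref{it:as_1} of \cref{as:Marginal} bounds $\exp(\Phi_h)$ only through a minimum with the $M$-sample average, so it does not deliver an a.s.\ bound on $\norm{\mathcal G_h}_Y$ that is uniform in $h$ and independent of $M$; and the observation that $h^{4s}\le h^{2s}$ presupposes a fourth-moment bound of order $h^{4s}$ that is not among the hypotheses. Replacing your additive expansion by the factorization repairs the step cleanly and is the one idea separating your sketch from the paper's proof.
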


\begin{proof} We start by the bound \eqref{eq:VarLik}. Since $\exp(-\Phi)$ is independent of $\nu_h$ and by definition of variance, we have
	\begin{equation}
		\Vm{\nu_h}{\exp(-\Phi_h)} = \Vm{\nu_h}{\exp(-\Phi_h) - \exp(-\Phi)} \leq \E^{\nu_h}\left[\left(\exp(-\Phi_h) - \exp(-\Phi)\right)^2\right], \quad \mu_0\text{-a.s.}
	\end{equation}
	We now remark that since $\Phi_h$ and $\Phi$ are positive $\mu_0$-a.s., then
	\begin{equation}
		\abs{\exp(-\Phi_h) - \exp(-\Phi)} \leq \abs{\Phi_h - \Phi}, \quad \mu_0\text{-a.s.},
	\end{equation}
	Hence, it holds
	\begin{equation}
		\Vm{\nu_h}{\exp(-\Phi_h)} \leq \E^{\nu_h}\left[\left(\Phi_h(u) - \Phi(u)\right)^2\right], \quad \mu_0\text{-a.s.},
	\end{equation}
	for a constant $C$ independent of $h$. The proof of \cite[Corollary 4.9]{Stu10} then yields for a positive constant $C > 0$ independent of $h$
	\begin{equation}\label{eq:ProofLikelihood_1}
		\begin{alignedat}{2}
			\Vm{\nu_h}{\exp(-\Phi_h)} &\leq C \E^{\nu_h}\left[\norm{\Gamma^{-1/2}\left(\mathcal G_h - \mathcal G\right)}_2^2\right] \\
			&\leq C \norm{\Gamma^{-1/2}}_2^2 \E^{\nu_h}\left[\norm{\mathcal G_h - \mathcal G}_2^2\right]\\
			& \leq C h^{2s}, &&\quad \mu_0\text{-a.s.},
		\end{alignedat}
	\end{equation}
	where we used in the last line that $\mathcal G_h$ has mean-square order $s$, and which proves \eqref{eq:VarLik}. We then consider the bound \eqref{eq:VarConst} for the normalizing constant and remark that Fubini's theorem, Jensen's inequality and \eqref{eq:ProofLikelihood_1} yield
	\begin{equation}
	\begin{aligned}
		\Vm{\nu_h}{Z_h} &= \Vm{\nu_h}{\Em{\mu_0}{\exp(-\Phi_h)}} \leq \Em{\mu_0}{\Vm{\nu_h}{\exp(-\Phi_h)}} \leq Ch^{2s},
	\end{aligned}
	\end{equation}
	which proves \eqref{eq:VarConst}. For the Monte Carlo estimators, the desired result holds since the samples are i.i.d.
\end{proof}
For the Monte Carlo approximation $\mu_{h,\mathrm{a}}^M$ of the averaged posterior $\mu_{h,\mathrm{a}}$, the application of usual techniques for assessing the quality of Monte Carlo estimators is not as direct. In particular, we rewrite
\begin{equation}
	\frac{\d \mu_{h,\mathrm{a}}^M}{\d \mu_0}(u) = \frac{\expapp^M\left(-\Phi_h(u)\right)}{Z_h^M},
\end{equation}
where
\begin{equation}
	\expapp^M\left(-\Phi_h(u)\right) \defeq \frac1M \sum_{i=1}^M\exp\left(-\Phi_h^{(i)}(u)\right)\frac{Z_h^M}{Z_h^{(i)}},
\end{equation}
i.e., $\expapp^M$ is the Monte Carlo estimator of the likelihood function weighted by the normalization constants. We remark that since by construction it holds $\mu_{h,\mathrm{a}}^M(X) = 1$, then
\begin{equation}
	Z_h^M = \int_X \expapp^M\left(-\Phi_h(u)\right) \dd \mu_0(u),
\end{equation}
and that thus $\expapp^M(-\Phi_h)$ and $\exp^M(-\Phi_h)$ have the same expectation with respect to the prior $\mu_0$, i.e.,
\begin{equation}
	\Em{\mu_0}{\expapp^M\left(-\Phi_h\right) - \exp^M\left(-\Phi_h\right)} = 0.
\end{equation}
Nevertheless, the quantity $\expapp^M(-\Phi_h)$ is not an unbiased estimator of $\exp(-\Phi_h)$ due to the weighting of the average. We can still consider the quantity
\begin{equation}
	\expapp(-\Phi_h) \defeq \exp\left(-\Phi_h\right) \frac{\Em{\nu_h}{Z_h}}{Z_h},
\end{equation}
and study the approximation of $\expapp(-\Phi_h)$ by $\expapp^M(-\Phi_h)$. We remark that $\expapp^M(-\Phi_h)$ is biased in approximating $\expapp(-\Phi_h)$, as well, and we therefore study the approximation in terms of the mean-square error, as shown in the following result. 

\begin{proposition}\label{prop:MSEExpApp} Under \cref{as:Marginal}, it holds
	\begin{equation}
		\Em{\mu_0}{\Em{\nu_h}{\left(\expapp^M\left(-\Phi_h\right) - \Em{\nu_h}{\expapp(-\Phi)}\right)^2}} \leq C\frac{h^{2s}}{M},
	\end{equation}
	for a positive constant $C$ independent of $h$ and $M$.
\end{proposition}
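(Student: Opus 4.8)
The plan is to read $\expapp^M(-\Phi_h)$ as a product of two empirical averages and to compare it with its $\nu_h$-mean $\Em{\nu_h}{\expapp(-\Phi_h)} = \Em{\nu_h}{Z_h}\,\Em{\nu_h}{\exp(-\Phi_h)/Z_h}$, which is simply $\Em{\nu_h}{Z_h}$ times the averaged density $\d\mu_{h,\mathrm{a}}/\d\mu_0$. Writing $g_h^{(i)} \defeq \exp(-\Phi_h^{(i)})/Z_h^{(i)}$, $\bar g_h^M \defeq M^{-1}\sum_{i=1}^M g_h^{(i)}$, $\bar g \defeq \Em{\nu_h}{g_h}$ and $\bar Z \defeq \Em{\nu_h}{Z_h}$, one has $\expapp^M(-\Phi_h) = Z_h^M\,\bar g_h^M$, so that the error splits as
\begin{equation}
	\expapp^M(-\Phi_h) - \Em{\nu_h}{\expapp(-\Phi_h)} = Z_h^M\left(\bar g_h^M - \bar g\right) + \bar g\left(Z_h^M - \bar Z\right).
\end{equation}
After squaring and using $(a+b)^2 \le 2a^2 + 2b^2$, I would estimate the two resulting terms separately, the point of the decomposition being to isolate the a.s.\ bounded factor $Z_h^M$ so that the Monte Carlo rate $M^{-1}$ survives in both.

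For the first term I would bound $(Z_h^M)^2 \le C_4^2$ $\nu_h$-a.s.\ using the a.s.\ upper bound on $Z_h^M$ in \cref{as:Marginal}, and then exploit that the $g_h^{(i)}$ are i.i.d.\ copies of $g_h$ to get $\Em{\nu_h}{(\bar g_h^M - \bar g)^2} = M^{-1}\Vm{\nu_h}{g_h}$; integrating in $u$ reduces this term to $\Em{\mu_0}{\Vm{\nu_h}{g_h}}$ up to the factor $M^{-1}$. For the second term I would use that $\bar g = \d\mu_{h,\mathrm{a}}/\d\mu_0$ integrates to one under $\mu_0$ and is bounded by $C_3$ (since $\exp(-\Phi_h)\le 1$ and $Z_h \ge C_3^{-1}$ by \cref{as:Marginal}), so that $\Em{\mu_0}{\bar g^2} \le C_3$, while $\Vm{\nu_h}{Z_h^M} = M^{-1}\Vm{\nu_h}{Z_h} \le Ch^{2s}/M$ follows directly from \cref{prop:Variance}. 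Both terms therefore carry the desired $M^{-1}$ factor.

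The crux is the remaining estimate $\Em{\mu_0}{\Vm{\nu_h}{g_h}} \le Ch^{2s}$. Here I would compare $g_h$ with its deterministic limit $g \defeq \exp(-\Phi)/Z$, use $\Vm{\nu_h}{g_h} \le \Em{\nu_h}{(g_h - g)^2}$, and split
\begin{equation}
	g_h - g = \frac{\exp(-\Phi_h) - \exp(-\Phi)}{Z_h} + \exp(-\Phi)\,\frac{Z - Z_h}{Z_h Z}.
\end{equation}
The lower bounds $Z \ge C_2^{-1}$ and $Z_h \ge C_3^{-1}$ of \cref{as:Marginal} control the denominators; the first numerator contributes $\Em{\nu_h}{(\exp(-\Phi_h) - \exp(-\Phi))^2} \le Ch^{2s}$ exactly as in the proof of \cref{prop:Variance}, and the second reduces to $\Em{\nu_h}{(Z - Z_h)^2} = (Z - \bar Z)^2 + \Vm{\nu_h}{Z_h}$, where $\Vm{\nu_h}{Z_h} \le Ch^{2s}$ by \cref{prop:Variance} and the deterministic gap obeys $|Z - \bar Z| \le \Em{\mu_0}{\Em{\nu_h}{|\exp(-\Phi_h) - \exp(-\Phi)|}} \le Ch^{s}$ by Jensen's and the Cauchy--Schwarz inequalities together with \cref{prop:Variance}. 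The main obstacle throughout is precisely this ratio structure, in which the random numerator $\exp(-\Phi_h)$ and denominator $Z_h$ are correlated and $\expapp^M$ is a genuinely \emph{biased} estimator of the target; the a.s.\ bounds on the normalising constants from \cref{as:Marginal}, combined with the product decomposition above, are what let the Monte Carlo rate $M^{-1}$ and the discretisation rate $h^{2s}$ decouple cleanly.
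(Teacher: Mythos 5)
Your proof is correct, but it takes a genuinely different route from the paper's. You write $\expapp^M(-\Phi_h)=Z_h^M\,\bar g_h^M$ with $\bar g_h^M$ the empirical mean of $g_h^{(i)}=\exp(-\Phi_h^{(i)})/Z_h^{(i)}$, and split the error as $Z_h^M(\bar g_h^M-\bar g)+\bar g(Z_h^M-\bar Z)$; the $\nu_h$-a.s.\ bound on $Z_h^M$ and the determinism of $\bar g$ under $\nu_h$ then let you reduce everything to $\Vm{\nu_h}{\bar g_h^M}=M^{-1}\Vm{\nu_h}{g_h}$ and $\Vm{\nu_h}{Z_h^M}=M^{-1}\Vm{\nu_h}{Z_h}$, so bias and variance never need to be separated. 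The paper instead performs an explicit bias--variance decomposition of the mean-square error around $\Em{\nu_h}{\expapp(-\Phi_h)}$: the bias term is identified as $\frac1M\sum_i\Em{\nu_h}{g_h^{(i)}(Z_h^M-\Em{\nu_h}{Z_h})}$ and controlled via Cauchy--Schwarz against $\Vm{\nu_h}{Z_h^M}$, while the variance term is handled with the product-variance inequality $\Vm{\mu}{XY}\le 2\Vm{\mu}{X}\norm{Y}_{L^\infty_\mu}+2\Em{\mu}{X}^2\Vm{\mu}{Y}$. Both arguments ultimately rest on the same two ingredients, namely \cref{prop:Variance} for $Z_h^M$ and the estimate $\Em{\mu_0}{\Vm{\nu_h}{\exp(-\Phi_h)/Z_h}}\le Ch^{2s}$, which is exactly the paper's \cref{lem:Variance_2}; your re-derivation of the latter (splitting $g_h-g$ into a numerator difference and a $Z-Z_h$ term, with the a.s.\ lower bounds on $Z$ and $Z_h$ controlling the denominators) matches the paper's proof in all essentials, differing only in that you bound $\Em{\nu_h}{(Z-Z_h)^2}$ through $(Z-\bar Z)^2+\Vm{\nu_h}{Z_h}$ where the paper uses Jensen and Fubini directly. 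Your decomposition is arguably the cleaner of the two, since it makes the source of the $M^{-1}$ factor transparent in each term and avoids the slightly delicate identification of the bias; what the paper's version buys is an explicit quantification of the bias of $\expapp^M$ as an estimator, which is of some independent interest given that the weighting makes it genuinely biased.
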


In order to prove \cref{prop:MSEExpApp}, we first introduce an additional result which allows to bound the variance of the Radon--Nykodim derivative of $\mu_{h,\mathrm{s}}$ under the randomization of the forward model.

\begin{lemma}\label{lem:Variance_2} Under \cref{as:Marginal}, it holds
	\begin{equation}
		\Em{\mu_0}{\Vm{\nu_h}{\frac{\exp(-\Phi_h)}{Z_h}}} \leq C h^{2s},
	\end{equation}
	for a positive constant $C$ independent of $h$.
\end{lemma}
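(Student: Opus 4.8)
The plan is to compare the random ratio $\exp(-\Phi_h)/Z_h$ with its deterministic limit $\exp(-\Phi)/Z$ and to control the fluctuations of numerator and denominator separately. Since the variance under $\nu_h$ is unchanged by subtracting a quantity that does not depend on the randomization of the forward map, for each fixed $u$ I would begin with
\begin{equation}
\Vm{\nu_h}{\frac{\exp(-\Phi_h)}{Z_h}} \leq \Em{\nu_h}{\left(\frac{\exp(-\Phi_h)}{Z_h} - \frac{\exp(-\Phi)}{Z}\right)^2},
\end{equation}
using that $\exp(-\Phi)/Z$ is deterministic with respect to $\nu_h$, so that the right-hand side is an upper bound for the centered second moment.

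Next I would split the ratio difference with the elementary identity $\tfrac{a}{b} - \tfrac{\bar a}{\bar b} = \tfrac{a-\bar a}{b} - \tfrac{\bar a(b-\bar b)}{b\bar b}$, where $a = \exp(-\Phi_h)$, $\bar a = \exp(-\Phi)$, $b = Z_h$, $\bar b = Z$. The denominators are tamed by \cref{as:Marginal}, which provides $Z_h \geq C_3^{-1}$ $\nu_h$-a.s. and $Z \geq C_2^{-1}$, while $\exp(-\Phi) \leq 1$ because $\Phi \geq 0$ $\mu_0$-a.s. Combining these bounds with $(x+y)^2 \leq 2x^2 + 2y^2$ yields, $\mu_0$-a.s.,
\begin{equation}
\Em{\nu_h}{\left(\frac{\exp(-\Phi_h)}{Z_h} - \frac{\exp(-\Phi)}{Z}\right)^2} \leq C\,\Em{\nu_h}{\left(\exp(-\Phi_h) - \exp(-\Phi)\right)^2} + C\,\Em{\nu_h}{(Z_h - Z)^2},
\end{equation}
for a constant $C$ depending only on $C_2$ and $C_3$.

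It then remains to bound the two terms. The first is exactly the mean-square quantity estimated within the proof of \cref{prop:Variance}, hence $\leq Ch^{2s}$ $\mu_0$-a.s. For the second, I would write $Z_h - Z = \Em{\mu_0}{\exp(-\Phi_h) - \exp(-\Phi)}$ and apply Jensen's inequality with respect to the probability measure $\mu_0$ to get $(Z_h - Z)^2 \leq \Em{\mu_0}{(\exp(-\Phi_h) - \exp(-\Phi))^2}$; taking $\Em{\nu_h}{\cdot}$ and exchanging the order of integration via Fubini reduces this once more to the same forward-map estimate, so that $\Em{\nu_h}{(Z_h - Z)^2} \leq Ch^{2s}$. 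Integrating the $\mu_0$-a.s. bound for the first term over $\mu_0$ (a probability measure), and noting that the second term does not depend on $u$, delivers the claimed $Ch^{2s}$.

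The computation is essentially routine once the ratio is split; the one point that needs care is that the numerator and denominator share the same realization $\omega$, so no independence is available. This is precisely why I would not attempt to factor the variance but instead bound each contribution by its own second moment through $(x+y)^2 \le 2x^2 + 2y^2$, with the lower bounds on $Z_h$ and $Z$ from \cref{as:Marginal} serving as the structural ingredient that keeps the denominator fluctuations under control.
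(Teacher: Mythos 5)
Your proposal is correct and follows essentially the same route as the paper: center the ratio at the deterministic quantity $\exp(-\Phi)/Z$, split the difference into a numerator term and a normalization-constant term, control the denominators via the a.s. bounds on $Z$ and $Z_h$ from \cref{as:Marginal}, and reduce both contributions to the mean-square forward-map estimate of \cref{prop:Variance} (using Jensen and Fubini for the $(Z_h-Z)^2$ term). The only cosmetic difference is your choice of algebraic split, which places the bounded factor $\exp(-\Phi)\le 1$ in the second term and thereby lets you bound it pointwise rather than via the H\"older step in $L^\infty_{\nu_h}$ that the paper uses; the substance is identical.
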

\begin{proof} Since $\exp(-\Phi)$ and $Z$ are independent of $\nu_h$ it holds
	\begin{equation}
		\Vm{\nu_h}{\frac{\exp(-\Phi_h)}{Z_h}} = \Vm{\nu_h}{\frac{\exp(-\Phi_h)}{Z_h} - \frac{\exp(-\Phi)}{Z}}, \quad \mu_0\text{-a.s.},
	\end{equation}
	which, applying $\V(X + Y) \leq 2\V(X) + 2\V(Y)$, yields
	\begin{equation}\label{eq:ProofLikelihood_2}
		\Vm{\nu_h}{\frac{\exp(-\Phi_h)}{Z_h}} \leq \frac{2}{Z} \Vm{\nu_h}{\exp(-\Phi_h)} + 2 \Vm{\nu_h}{\exp(-\Phi_h)\left(\frac1{Z_h} - \frac1Z\right)}, \quad \mu_0\text{-a.s.}
	\end{equation}
	We now remark that for any positive $a$ and $b$ it holds
	\begin{equation}
		\left(\frac1a - \frac1b\right)^2 \leq \max\{a^{-4}, b^{-4}\}\abs{a-b}^2,
	\end{equation}
	and therefore Hölder's inequality implies
	\begin{equation}\label{eq:ProofLikelihood_3}
		\begin{aligned}
			\Vm{\nu_h}{\exp(-\Phi_h)\left(\frac1{Z_h} - \frac1Z\right)} 
			&\leq \Em{\nu_h}{\exp(-2\Phi_h)\left(\frac1{Z_h} - \frac1Z\right)^2}\\
			&\leq
			\begin{alignedat}[t]{2}
				&\norm{\exp(-2\Phi_h)\max\{Z_h^{-4}, Z^{-4}\}}_{L^\infty_{\nu_h}(\Omega)} \\
				&\quad \times \Em{\nu_h}{(Z_h - Z)^2},&&\quad \mu_0\text{-a.s.}.
			\end{alignedat} \\
		\end{aligned}
	\end{equation}
	Since $\Phi_h > 0$ $\mu_0$- and $\nu_h$-a.s. and applying $\norm{\max\{f,g\}}_{L^\infty_{\nu_h}} \leq \max\{\norm{f}_{L^\infty_{\nu_h}},\norm{g}_{L^\infty_{\nu_h}}\}$, we now have by \cref{as:Marginal}
	\begin{equation}
		\norm{\exp(-2\Phi_h)\max\{Z_h^{-4}, Z^{-4}\}}_{L^\infty_{\nu_h}(\Omega)} \leq \max\left\{\norm{Z_h^{-4}}_{L^\infty_{\nu_h}(\Omega)}, Z^{-4}\right\} \leq C, \qquad \mu_0\text{-a.s.},
	\end{equation}
	where $C = \max\{C_3^4,C_4^4\}$. Moreover, Jensen's inequality and Fubini's theorem give
	\begin{equation}
		\Em{\nu_h}{(Z_h - Z)^2} = \Em{\nu_h}{\Em{\mu_0}{\exp(-\Phi_h) - \exp(-\Phi)}^2} \leq \Em{\mu_0}{\Em{\nu_h}{\left(\exp(-\Phi_h) - \exp(-\Phi)\right)^2}},
	\end{equation} 
	which, proceeding as in the proof of \cref{prop:Variance}, gives
	\begin{equation}
		\Em{\nu_h}{(Z_h - Z)^2} \leq Ch^{2s}.
	\end{equation}
	Together with \cref{prop:Variance}, \eqref{eq:ProofLikelihood_2}, \eqref{eq:ProofLikelihood_3} and under \cref{as:Marginal}, this finally implies 
	\begin{equation}
		\Em{\mu_0}{\Vm{\nu_h}{\frac{\exp(-\Phi_h)}{Z_h}}} \leq Ch^{2s},
	\end{equation}
	which is the desired result.
\end{proof}

It is now possible to prove \cref{prop:MSEExpApp}.

\begin{proof}[Proof of \cref{prop:MSEExpApp}] The classic bias-variance decomposition of the mean-square error gives
	\begin{equation}\label{eq:BV_BiasVariance}
		\Em{\nu_h}{\left(\expapp^M\left(-\Phi_h\right) - \Em{\nu_h}{\expapp(-\Phi)}\right)^2} \leq \bias^{\nu_h}\left(\expapp^M\left(-\Phi_h\right)\right)^2 + \Vm{\nu_h}{\expapp^M\left(-\Phi_h\right)},
	\end{equation}
	$\mu_0$-a.s., where
	\begin{equation}
		\bias^{\nu_h}\left(\expapp^M\left(-\Phi_h\right)\right) \defeq \Em{\nu_h}{\expapp^M\left(-\Phi_h\right) - \expapp\left(-\Phi_h\right)}.
	\end{equation}
	We first consider the bias term. Replacing the definition of $\expapp(-\Phi_h)$ and $\expapp^M(-\Phi_h)$, we notice that it holds
	\begin{equation}
		\bias^{\nu_h}\left(\expapp^M\left(-\Phi_h\right)\right) = \frac1M \sum_{i=1}^M \Em{\nu_h}{\frac{\exp\left(-\Phi_h^{(i)}\right)}{Z_h^{(i)}}\left(Z_h^M - \Em{\nu_h}{Z_h}\right)}.
	\end{equation}
	Indeed, since $\exp(-\Phi_h^{(i)}) / Z_h^{(i)} \sim \exp(-\Phi_h) / Z_h$, we have
	\begin{equation}
		\frac1M\sum_{i=1}^M \Em{\nu_h}{Z_h} \Em{\nu_h}{\frac{\exp\left(-\Phi_h^{(i)}\right)}{Z_h^{(i)}} - \frac{\exp\left(-\Phi_h\right)}{Z_h}} = 0.
	\end{equation}
	By linearity of the expectation, the Cauchy--Schwarz inequality and \cref{prop:Variance} we then obtain
	\begin{equation}
		\begin{aligned}
			\bias^{\nu_h}\left(\expapp^M\left(-\Phi_h\right)\right)^2 &= \Em{\nu_h}{\left(Z_h^M - \Em{\nu_h}{Z_h}\right)\frac1M \sum_{i=1}^M \frac{\exp\left(-\Phi_h^{(i)}\right)}{Z_h^{(i)}}} \\
			&\leq \Vm{\nu_h}{Z_h^M}\Em{\nu_h}{\left(\frac1M \sum_{i=1}^M 			\frac{\exp\left(-\Phi_h^{(i)}\right)}{Z_h^{(i)}}\right)^2}.\\
			&\leq C\frac{h^{2s}}{M} \Em{\nu_h}{\left(\frac1M \sum_{i=1}^M 	\frac{\exp\left(-\Phi_h^{(i)}\right)}{Z_h^{(i)}}\right)^2}, \qquad \mu_0\text{-a.s.},
		\end{aligned}
	\end{equation}
	for a constant $C > 0$. We then remark that since $\exp(-\Phi_h^{(i)}) / Z_h^{(i)} \iid \exp(-\Phi_h) / Z_h$ and by definition of variance
	\begin{equation}
		\begin{aligned}
			\Em{\nu_h}{\left(\frac1M \sum_{i=1}^M \frac{\exp\left(-\Phi_h^{(i)}\right)}{Z_h^{(i)}}\right)^2} 
			&= \Vm{\nu_h}{\frac1M \sum_{i=1}^M \frac{\exp\left(-\Phi_h^{(i)}\right)}{Z_h^{(i)}}} + \Em{\nu_h}{\frac1M \sum_{i=1}^M \frac{\exp\left(-\Phi_h^{(i)}\right)}{Z_h^{(i)}}}^2\\
			&= \frac1M \Vm{\nu_h}{\frac{\exp\left(-\Phi_h\right)}{Z_h}} + \Em{\nu_h}{ \frac{\exp\left(-\Phi_h\right)}{Z_h}}^2.
		\end{aligned}
	\end{equation}
	Hence, there exists a constant $C > 0$ such that
	\begin{equation}\label{eq:BV_Bias}
		\bias^{\nu_h}\left(\expapp^M\left(-\Phi_h\right)\right)^2 \leq C\left(\frac1M \Vm{\nu_h}{\frac{\exp\left(-\Phi_h\right)}{Z_h}} + \Em{\nu_h}{ \frac{\exp\left(-\Phi_h\right)}{Z_h}}^2\right)\frac{h^{2s}}{M}, \quad \mu_0\text{-a.s.}
	\end{equation}
	We now consider the variance term in \eqref{eq:BV_BiasVariance}. For two $\mu$-distributed random variables $X$ and $Y$, it is possible to show that
	\begin{equation}
		\Vm{\mu}{XY} \leq 2 \Vm{\mu}{X}\norm{Y}_{L^\infty_\mu} + 2 \Em{\mu}{X}^2 \Vm{\mu}{Y}.
	\end{equation}
	Applying this inequality to $\expapp^M(-\Phi_h)$ yields for a constant $C > 0$ independent of $h$ and $M$
	\begin{equation}\label{eq:BV_Variance}
		\begin{aligned}
			\Vm{\nu_h}{\expapp^M(-\Phi_h)} &\leq 
			\begin{aligned}[t]
				&2\Vm{\nu_h}{\frac1M \sum_{i=1}^M \frac{\exp\left(-\Phi_h^{(i)}\right)}{Z_h^{(i)}}}\norm{Z_h^M}_{L_{\nu_h}^\infty(\Omega)} \\
				&\quad + 2 \Em{\nu_h}{\frac1M \sum_{i=1}^M \frac{\exp\left(-\Phi_h^{(i)}\right)}{Z_h^{(i)}}}^2\Vm{\nu_h}{Z_h^M}
			\end{aligned}\\
			&=
			\begin{aligned}[t]
			 &\frac{2}{M} \Vm{\nu_h}{\frac{\exp\left(-\Phi_h\right)}{Z_h}}\norm{Z_h^M}_{L_{\nu_h}^\infty(\Omega)} \\
			 &\quad + C \Em{\nu_h}{\frac{\exp\left(-\Phi_h\right)}{Z_h}}^2 \frac{h^{2s}}{M}, \quad \mu_0\text{-a.s.},
			\end{aligned}
		\end{aligned}
	\end{equation}
	where we used that $\exp(-\Phi_h^{(i)}) / Z_h^{(i)} \iid \exp(-\Phi_h) / Z_h$ and \cref{prop:Variance} for the second term. Therefore, in light of \eqref{eq:BV_BiasVariance}, \eqref{eq:BV_Bias}, and \eqref{eq:BV_Variance}, there exists a constant $C > 0$ independent of $h$ and $M$ and such that
	\begin{equation}\label{eq:BV_BeforeMu0}
		\Em{\nu_h}{\left(\expapp^M\left(-\Phi_h\right) - \Em{\nu_h}{\expapp(-\Phi)}\right)^2} \leq C(I_1 + I_2), \quad \mu_0\text{-a.s.},
	\end{equation}
	where
	\begin{equation}
		\begin{aligned}
			I_1 &\defeq \frac1M\left(\norm{Z_h^M}_{L_{\nu_h}^\infty(\Omega)} + \frac{h^{2s}}{M}\right)\Vm{\nu_h}{\frac{\exp\left(-\Phi_h\right)}{Z_h}}, \\
			I_2 &\defeq \Em{\nu_h}{\frac{\exp\left(-\Phi_h\right)}{Z_h}}^2 \frac{h^{2s}}{M}.
		\end{aligned}
	\end{equation}
	We now take the expectation $\E^{\mu_0}$ with respect to the prior on both sides of \eqref{eq:BV_BeforeMu0}. For $I_1$, it holds under \cref{as:Marginal} and due to \cref{lem:Variance_2}
	\begin{equation}\label{eq:BV_Mu0I1}
		\Em{\mu_0}{I_1} \leq C\frac{h^{2s}}{M}.
	\end{equation}
	For $I_2$, since $\exp(-\Phi_h) \leq 1$ $\nu_h$-a.s. it holds
	\begin{equation}\label{eq:BV_Mu0I2}
		\Em{\mu_0}{I_2} \leq \Em{\nu_h}{Z_h^{-1}}^2\frac{h^{2s}}{M} \leq C \frac{h^{2s}}{M},
	\end{equation}
	where the second inequality holds under \cref{as:Marginal} for a constant $C > 0$ independent of $h$ and $M$. Combining \eqref{eq:BV_BeforeMu0}, \eqref{eq:BV_Mu0I1}, and \eqref{eq:BV_Mu0I2} then yields the desired result.
\end{proof}

\subsection{Approximation of the Marginal Posterior} 

In this section we prove \cref{thm:ConvMarginal}, i.e., the convergence of the Monte Carlo estimator of the marginal approximate posterior $\mu_{h,\mathrm{m}}$ with respect to the discretization parameter $h$ and the number of samples $M$. Let us remark that the proof is similar to the proof of \cite[Theorem 3.1]{LST18}.

\begin{proof}[Proof of \cref{thm:ConvMarginal}] We first notice that it holds
	\begin{equation}
		\sqrt{\frac{\d \mu_{h,\mathrm{m}}}{\d \mu_0}} - \sqrt{\frac{\d \mu_{h,\mathrm{m}}^M}{\d \mu_0}} = 
		\begin{aligned}[t]
			&\frac{\sqrt{\Em{\nu_h}{\exp\left(-\Phi_h\right)}} - \sqrt{\exp^M\left(-\Phi_h\right)}}{\sqrt{\Em{\nu_h}{Z_h}}} \\
			&\quad + \sqrt{\exp^M\left(-\Phi_h\right)}\left(\frac1{\sqrt{\Em{\nu_h}{Z_h}}} - \frac1{\sqrt{Z_h^M}} \right).
		\end{aligned}
	\end{equation}
	Therefore, by definition of the Hellinger distance and applying the inequality $(a+b)^2 \leq 2a^2 + 2b^2$ we obtain
	\begin{equation}
		\Hell\left(\mu_{h,\mathrm{m}}, \mu_{h,\mathrm{m}}^M\right)^2 \leq I_1 + I_2, \quad \nu_h\text{-a.s.},
	\end{equation}
	where $I_1$ and $I_2$ are the positive quantities defined as
	\begin{equation}\label{eq:ProofMarg_I1I2}
		\begin{aligned}
			I_1 &\defeq \frac1{\Em{\nu_h}{Z_h}} \E^{\mu_0}\left[ \left(\sqrt{\Em{\nu_h}{\exp\left(-\Phi_h\right)}} - \sqrt{\exp^M\left(-\Phi_h\right)}\right)^2\right], \\
			I_2 &\defeq Z_h^M\left(\frac1{\sqrt{\Em{\nu_h}{Z_h}}} - \frac1{\sqrt{Z_h^M}} \right)^2 .
		\end{aligned}
	\end{equation}
	We first consider $I_1$. Applying the inequality
	\begin{equation}
		(a-b)^2 \leq \frac{(a^2-b^2)^2}{a^2+b^2},
	\end{equation}
	valid for $a,b > 0$, we obtain
	\begin{equation}
		\Em{\nu_h}{Z_h} I_1 \leq \E^{\mu_0}\left[\frac{\left(\Em{\nu_h}{\exp\left(-\Phi_h\right)} - \exp^M\left(-\Phi_h\right)\right)^2}{\Em{\nu_h}{\exp\left(-\Phi_h\right)} + \exp^M\left(-\Phi_h\right)}\right], \quad \nu_h\text{-a.s.},
	\end{equation}
	and Hölder's inequality then yields
	\begin{equation}
		\Em{\nu_h}{Z_h}I_1 \leq
		\begin{aligned}[t]
			&\Em{\mu_0}{\left(\Em{\nu_h}{\exp\left(-\Phi_h\right)} - \exp^M\left(-\Phi_h\right)\right)^2} \\
			&\quad \times \norm{\left(\Em{\nu_h}{\exp\left(-\Phi_h\right)} + \exp^M\left(-\Phi_h\right)\right)^{-1}}_{L^\infty_{\mu_0}(X)}, \quad \nu_h\text{-a.s.}
		\end{aligned}
	\end{equation}
	We now take the expectation $\E^{\nu_h}$ on both sides and Hölder's inequality to obtain
	\begin{equation}
		\Em{\nu_h}{Z_h}\Em{\nu_h}{I_1} \leq
	\begin{aligned}[t]
			 &\Em{\nu_h}{\Em{\mu_0}{\left(\Em{\nu_h}{\exp\left(-\Phi_h\right)} - \exp^M\left(-\Phi_h\right)\right)^2}} \\
			&\quad \times \norm{\norm{\left(\Em{\nu_h}{\exp\left(-\Phi_h\right)} + \exp^M\left(-\Phi_h\right)\right)^{-1}}_{L^\infty_{\mu_0}(X)}}_{L^\infty_{\nu_h}(\Omega)}.
	\end{aligned}
	\end{equation}
	For the second factor in the right-hand side, we remark that for $a,b > 0$ it holds $(a+b)^{-1} \leq \min\{a^{-1},b^{-1}\}$, and since $\norm{\min\{f,g\}}_{L^\infty_{\mu_0}} \leq \min\{\norm{f}_{L^\infty_{\mu_0}},\norm{g}_{L^\infty_{\mu_0}}\}$, we get
	\begin{equation}
	\begin{aligned}
		&\norm{\left(\Em{\nu_h}{\exp\left(-\Phi_h\right)} + \exp^M\left(-\Phi_h\right)\right)^{-1}}_{L^\infty_{\mu_0}(X)}  \\
		&
		\begin{aligned}[t]
		\qquad \qquad \qquad &\leq \norm{\min\left\{\Em{\nu_h}{\exp\left(-\Phi_h\right)}^{-1}, \exp^M\left(-\Phi_h\right)^{-1}\right\}}_{L^\infty_{\mu_0}(X)}\\
		&\leq \min\left\{\norm{\Em{\nu_h}{\exp\left(-\Phi_h\right)}^{-1}}_{L^\infty_{\mu_0}(X)}, \norm{\exp^M\left(-\Phi_h\right)^{-1}}_{L^\infty_{\mu_0}(X)}\right\}\\
		&\leq 
		\min\left\{\Em{\nu_h}{\norm{\exp\left(\Phi_h\right)}_{L^\infty_{\mu_0}(X)}}, \norm{\exp^M\left(\Phi_h\right)}_{L^\infty_{\mu_0}(X)}\right\} \leq C, \qquad \nu_h\text{-a.s.},
		\end{aligned}
	\end{aligned}
	\end{equation}
	where we applied Jensen's inequality and the discrete Jensen inequality to in the last line, and where $C = C_1$ is given in \cref{as:Marginal}. Applying Fubini's theorem, under \cref{as:Marginal} and noticing that $\Em{\nu_h}{\exp(-\Phi_h)} = \Em{\nu_h}{\exp^M(-\Phi_h)}$ we then obtain
	\begin{equation}
		\Em{\nu_h}{I_1} \leq C \Em{\mu_0}{\Vm{\nu_h}{\exp^M\left(-\Phi_h\right)}},
	\end{equation}
	which implies by \cref{prop:Variance}
	\begin{equation}\label{eq:ProofMarg_I1}
		\Em{\nu_h}{I_1} \leq C\frac{h^{2s}}{M}.
	\end{equation}
	We now consider $I_2$ in \eqref{eq:ProofMarg_I1I2}. Using the inequality
	\begin{equation}
		\left(\frac1{\sqrt{a}} - \frac1{\sqrt{b}}\right)^2 \leq \frac14 \max\{a^{-3},b^{-3}\}(a-b)^2,
	\end{equation}
	valid for $a,b > 0$, we obtain
	\begin{equation}
		I_2 \leq \frac{Z_h^M}4  \max\{\Em{\nu_h}{Z_h}^{-3}, (Z_h^M)^{-3}\}\left(\Em{\nu_h}{Z_h} - Z_h^M\right)^2.
	\end{equation}
	Taking the expectation $\E^{\nu_h}$ on both sides, replacing $\Em{\nu_h}{Z_h} = \Em{\nu_h}{Z_h^M}$, and applying Hölder's inequality yields
	\begin{equation}
		\Em{\nu_h}{I_2} \leq \frac14 \norm{Z_h^M}_{L^\infty_{\nu_h}(\Omega)} \norm{\max\{\Em{\nu_h}{Z_h}^{-3}, (Z_h^M)^{-3}\}}_{L^\infty_{\nu_h}(\Omega)}\Vm{\nu_h}{Z_h^M}.
	\end{equation}
	We now have $\norm{\max\{f,g\}}_{L^\infty_{\nu_h}} \leq \max\{\norm{f}_{L^\infty_{\nu_h}}, \norm{g}_{L^\infty_{\nu_h}}\}$ and therefore
	\begin{equation}
		\Em{\nu_h}{I_2} \leq \frac14 \norm{Z_h^M}_{L^\infty_{\nu_h}(\Omega)} \max\left\{\Em{\nu_h}{Z_h}^{-3}, \norm{(Z_h^M)^{-3}}_{L^\infty_{\nu_h}(\Omega)}\right\}\Vm{\nu_h}{Z_h^M}.
	\end{equation}
	Hence, under \cref{as:Marginal} it holds
	\begin{equation}
		\Em{\nu_h}{I_2} \leq C \Vm{\nu_h}{Z_h^M},
	\end{equation}
	where $C$ is a positive constant independent of $h$ and $M$. An application of \cref{prop:Variance} then implies
	\begin{equation}
		\Em{\nu_h}{I_2} \leq \frac{h^{2s}}{M},
	\end{equation}
	which concludes the proof with \eqref{eq:ProofMarg_I1I2} and \eqref{eq:ProofMarg_I1}.
\end{proof}

\subsection{Approximation of the Averaged Posterior} 

In this section we prove \cref{thm:ConvSample}, the main result of convergence for the Monte Carlo estimator $\mu_{h,\mathrm{a}}^M$ of the averaged posterior distribution $\mu_{h,\mathrm{a}}$.

\begin{proof}[Proof of \cref{thm:ConvSample}] We proceed similarly to \cref{thm:ConvMarginal} and write
	\begin{equation}
	\begin{aligned}
		\sqrt{\frac{\d \mu_{h,\mathrm{a}}}{\d \mu_0}} - \sqrt{\frac{\d \mu_{h,\mathrm{a}}^M}{\d \mu_0}} &= 
		\begin{aligned}[t]
			&\sqrt{\Em{\nu_h}{\frac{\exp\left(-\Phi_h\right)}{Z_h}}} - \frac{\sqrt{\expapp^M(-\Phi_h)}}{\sqrt{\Em{\nu_h}{Z_h}}}\\
			&+ \sqrt{\expapp^M(-\Phi_h)}\left(\frac1{\sqrt{\Em{\nu_h}{Z_h}}} - \frac1{Z_h^M}\right)
		\end{aligned} \\
		&=
		\begin{aligned}[t]
			&\frac1{\sqrt{\Em{\nu_h}{Z_h}}}\left(\sqrt{\Em{\nu_h}{\expapp(-\Phi_h)}} - \sqrt{\expapp^M(-\Phi_h)}\right)\\
			&+ \sqrt{\expapp^M(-\Phi_h)}\left(\frac1{\sqrt{\Em{\nu_h}{Z_h}}} - \frac1{Z_h^M}\right),
		\end{aligned}
	\end{aligned}
	\end{equation}
	which, applying $(a+b)^2 \leq 2a^2 + 2b^2$, yields
	\begin{equation}\label{eq:ProofSam_I1I2}
		\Hell\left(\mu_{h,\mathrm{a}}, \mu_{h,\mathrm{a}}^M\right) \leq I_1 + I_2, \quad \nu_h\text{-a.s.},
	\end{equation}
	and where, since $\E^{\mu_0}[\expapp^M(-\Phi_h)] = Z_h^M$, it holds
	\begin{equation}
	\begin{aligned}
		I_1 &\defeq \frac1{\Em{\nu_h}{Z_h}}\Em{\mu_0}{\left(\sqrt{\Em{\nu_h}{\expapp(-\Phi_h)}} - \sqrt{\expapp^M(-\Phi_h)}\right)^2},\\
		I_2 &\defeq Z_h^M\left(\frac1{\sqrt{\Em{\nu_h}{Z_h}}} - \frac1{Z_h^M}\right)^2.
	\end{aligned}
	\end{equation}
	We first consider $I_1$. Proceeding as in the proof of \cref{thm:ConvMarginal}, we obtain
	\begin{equation}
	\begin{aligned}
		\Em{\nu_h}{Z_h}\Em{\nu_h}{I_1} &\leq 
		\begin{aligned}[t]
			&\Em{\nu_h}{\Em{\mu_0}{\left(\Em{\nu_h}{\expapp(-\Phi_h)} - \expapp^M(-\Phi_h)\right)^2}}\\
			&\qquad \quad \times \norm{\norm{\left(\Em{\nu_h}{\expapp\left(-\Phi_h\right)} + \expapp^M(-\Phi_h)\right)^{-1}}_{L^\infty_{\mu_0}(X)}}_{L^\infty_{\nu_h}(\Omega)},
		\end{aligned}
	\end{aligned}
	\end{equation}
	where moreover it holds for the second factor on the right-hand side
	\begin{equation}
	\begin{aligned}
		&\norm{\left(\Em{\nu_h}{\expapp\left(-\Phi_h\right)} + \expapp^M(-\Phi_h)\right)^{-1}}_{L^\infty_{\mu_0}(X)} \\
		&\qquad \qquad \qquad \leq \min\left\{\norm{\Em{\nu_h}{\expapp(-\Phi_h)}^{-1}}_{L^\infty_{\mu_0}}, \norm{\expapp^M(-\Phi_h)^{-1}}_{L^\infty_{\mu_0}}\right\}, \qquad \nu_h\text{-a.s.}
	\end{aligned}
	\end{equation}
	We now remark that applying Jensen's and the discrete Jensen inequality yield under \cref{as:Marginal}
	\begin{equation}
	\begin{aligned}
		&\Em{\nu_h}{\expapp(-\Phi_h)}^{-1} \leq C \Em{\nu_h}{\exp(\Phi_h)},\\
		&\expapp^M(-\Phi_h)^{-1} \leq C \exp^M(\Phi_h), \qquad \nu_h\text{-a.s.},
	\end{aligned}
	\end{equation}
	where $C$ is a positive constant independent of $h$ and $M$. Hence, it holds under \cref{as:Marginal}
	\begin{equation}
		\norm{\left(\Em{\nu_h}{\expapp\left(-\Phi_h\right)} + \expapp^M(-\Phi_h)\right)^{-1}}_{L^\infty_{\mu_0}(X)} \leq C, \qquad \nu_h\text{-a.s.},
	\end{equation}
	and therefore
	\begin{equation}
		\Em{\nu_h}{I_1} \leq C\Em{\nu_h}{\Em{\mu_0}{\left(\Em{\nu_h}{\expapp(-\Phi_h)} - \expapp^M(-\Phi_h)\right)^2}}.
	\end{equation}
	We then apply Fubini's theorem and \cref{prop:MSEExpApp} to obtain
	\begin{equation}\label{eq:ProofSam_I1}
		\Em{\nu_h}{I_1} \leq C \Em{\mu_0}{\Em{\nu_h}{\left(\Em{\nu_h}{\expapp(-\Phi_h)} - \expapp^M(-\Phi_h)\right)^2}} \leq C \frac{h^{2s}}{M}.
	\end{equation}
	We now consider $I_2$ and remark that it is equal to the quantity with the same symbol in the proof of \cref{thm:ConvMarginal}. Therefore
	\begin{equation}
		\Em{\nu_h}{I_2} \leq C \frac{h^{2s}}{M},
	\end{equation}
	which, together with \eqref{eq:ProofSam_I1I2} and \eqref{eq:ProofSam_I1}, yields the desired result.
\end{proof}

\section{Conclusion}\label{sec:Conclusion}

We presented and analyzed techniques for approximating the solution of inverse problems involving randomized approximations of the forward map. In particular, we rigorously studied the convergence of sampling-based posterior measures to the intractable marginal and sample approximations of the true posterior which have been recently introduced in \cite{LST18}. Our analysis shows that in both cases the number of samples which is needed in order to obtain a good approximation of the posterior could be set to small number, in case the discretization parameter is small. Moreover, we described and compared numerically MCMC techniques that allow to sample from the posteriors which we introduced and practically solve the inverse problem.

\subsection*{Acknowledgments}

The author is partially supported by the Swiss National Science Foundation, under grant No. 200020\_172710. The author thanks T.J. Sullivan for interesting advice on the topic of this work, and Assyr Abdulle for invaluable scientific and personal support over the years. 

\def\cprime{$'$}

\end{document}